\documentclass[12pt]{article}

\usepackage{amsmath}
\usepackage{amsthm}
\usepackage{amsfonts}
\usepackage{ amssymb }
\usepackage{authblk}

\usepackage[english]{babel}

\usepackage{bbm} 

\usepackage{centernot} 

\usepackage{esvect} 
\usepackage{enumitem} 

\usepackage{graphicx} 

\usepackage{mathtools} 

\usepackage{physics} 

\usepackage[colorinlistoftodos]{todonotes} 
\usepackage{xcolor} 

\usepackage[utf8]{inputenc}
\usepackage[T1]{fontenc}
\usepackage{CormorantGaramond} 

\theoremstyle{plain} 
\usepackage{geometry}
 \geometry{
 a4paper,
 total={170mm,257mm},
 left=20mm,
 top=10mm,
 }

\author{\sc Peter Sarnak, Nina Zubrilina}

\DeclarePairedDelimiter\floor{\lfloor}{\rfloor}

\begin{document}

\newtheorem{thm}{Theorem}
\newtheorem*{est}{Theorem}
\newtheorem*{ptsn}{Petersson Formula}
\newtheorem*{sv}{Large Sieve Inequality}
\newtheorem{theorem}{Theorem}[section]
\newtheorem*{theoremnonum}{Theorem}
\newtheorem{observation}[theorem]{Observation}
\newtheorem{corollary}[theorem]{Corollary}
\newtheorem{lemma}[theorem]{Lemma}
\newtheorem{prop}[theorem]{Proposition}
\newtheorem{prob}[theorem]{Problem}
\newtheorem{lemma*}{Lemma}
\newtheorem{defn}[theorem]{Definition}
\newtheorem{de*}{Definition}
\newtheorem{remark}[theorem]{Remark}
\newtheorem*{rem}{Remark}
\newtheorem*{lem}{Lemma}

\newenvironment{letter}{\begin{enumerate}[a)]}{\end{enumerate}}

\newlist{num}{enumerate}{1}
\setlist[num, 1]{label = (\alph*)}
\newcommand{\myitem}{\item}

\renewcommand{\S}{\mathcal{S}}
\newcommand{\Prob}{\mathbb{P}}
\newcommand{\E}{\mathbb{E}}
\newcommand{\Q}{\mathbb{Q}}
\newcommand{\R}{\mathbb{R}}
\newcommand{\N}{\mathbb{N}}
\newcommand{\Z}{\mathbb{Z}}
\newcommand{\F}{\mathcal{F}}

\newcommand{\C}{\mathbb{C}}
\newcommand{\Cl}{\operatorname{Cl}}
\newcommand{\disc}{\operatorname{disc}}
\newcommand{\Gal}{\operatorname{Gal}}
\newcommand{\Img}{\operatorname{Im}}
\renewcommand{\sl}{\operatorname{SL}}
\newcommand{\eps}{\varepsilon}
\newcommand{\ind}{\mathbbm{1}}
\renewcommand{\phi}{\varphi}
\newcommand{\hsum}{\sum\nolimits^h}

\newcommand\ang[1]{\left\langle#1\right\rangle}
\renewcommand{\L}{\mathbf{L}}
\newcommand{\sym}{\mathrm{Sym}}

\renewcommand{\d}{\partial}
\newcommand*\Lapl{\mathop{}\!\mathbin\bigtriangleup}
\newcommand{\mat}[4] {\left(\begin{array}{cccc}#1 & #2\\ #3 & #4 \end{array}\right)}
\newcommand{\mattwo}[2] {\left(\begin{array}{cc}#1\\ #2 \end{array}\right)}
\newcommand{\pphi}{\varphi}
\newcommand{\eqmod}[1]{\overset{\bmod #1}{\equiv}}
\renewcommand{\O}{\mathrm{O}}
\renewcommand{\subset}{\subseteq}
\newcommand{\fr}[1]{\mathfrak{#1}}
\newcommand{\p}{\mathfrak{p}}
\newcommand{\m}{\mathfrak{m}}
\newcommand{\supp}{\mathrm{supp}}
\newcommand{\limto}[1]{\xrightarrow[#1]{}}

\newcommand{\under}[2]{\mathrel{\mathop{#2}\limits_{#1}}}

\newcommand{\underwithbrace}[2]{  \makebox[0pt][l]{$\smash{\underbrace{\phantom{%
    \begin{matrix}#2\end{matrix}}}_{\text{$#1$}}}$}#2}

\title{\sc  Convergence to the Plancherel measure of Hecke Eigenvalues}
\maketitle

\begin{abstract}
We give improved uniform estimates for the rate of convergence to Plancherel measure of Hecke eigenvalues of holomorphic forms of weight $2$ and level $N$. These are applied to determine the sharp cutoff for the non-backtracking random walk on arithmetic Ramanujan graphs and to Serre's problem of bounding the multiplicities of modular forms whose coefficients lie in number fields of degree $d \geq 1$.

\end{abstract}

\section{Introduction}
It is well known that the distribution of Hecke eigenvalues of modular forms at primes $p_1, \ldots, p_r$ converges to the product of the corresponding $p$-adic Plancherel measures as one varies over certain families (\cite{sar1}, \cite{serre}, \cite{CDF}). Our aim in this paper is to establish uniform rates on this convergence and to apply these to problems of sharp cutoff for random walks on Ramanujan graphs (\cite{ns}) and to the factorization of the Jacobian of the modular curve $X_0(N)$ as in \cite{serre}.
 
The Eichler-Selberg trace formula expresses the trace of the Hecke operator $T_n$ on the space $S(N) := S_2(N)$ of holomorphic cusp forms of weight $2$ for $\Gamma_0(N)$ in terms of class numbers of binary quadratic forms. Using this, one can show (\cite{serre}, $\text{Prop. }4$) that as $f$ runs over a Hecke basis $H(N)$ of such eigenforms with eigenvalues 
$$T_n f =: \lambda_f(n) \sqrt{n} \cdot f, \ \ (n, N) = 1, $$
we will have 
\begin{align}\label{one}
\frac{1}{\abs{H(N)}} \abs{\sum_{f \in H(N)} \lambda_f(n) - \frac{\delta_{N}(n, \square)}{12}\psi(N)}\ll_{n} (n/N)^{1/2}\cdot d(N),
\end{align}
where
\begin{align*}
\delta_{N}(n, \square) := 
\begin{cases} 1 &\text{ if }n \text{ is a square modulo }N, \\
0 &\text{ otherwise,} 
\end{cases}
\end{align*} 
$d(N) := \sum_{d | N} 1 \ll_\eps N^\eps$ is the divisor function, and $\psi(N) := N\prod_{p | N}(1 + 1/p)$ is the Dedekind psi function. Murty and Sinha (\cite{MS}) give explicit and effective bounds in (\ref{one}).

To extend the range of $n$ for which the left-hand side of (\ref{one}) goes to $0$ with $N$, we introduce and remove $1/L(1, \sym^2 f)$ weights into the sum. This allows us to use the Petersson trace formula, replacing class numbers with Kloosterman sums, which enjoy sharp bounds coming from the arithmetic geometry of curves (\cite{weil}). This technique applied to a similar problem is outlined in \cite{rudnickletter} and is used on other problems in \cite{crelle} and \cite{ILS}. It allows us to double the exponent in the range of $n$, which is crucial for certain applications.

 In what follows, our aim is to establish sharp estimates, and to simplify the analysis, we assume that $N$ is prime. In much of what we do, this assumption can be removed. We address this further in the final section \ref{lastsection}. 
\begin{thm}\label{thmone}
Let $\eps > 0$ and let $m, n$ be integers coprime to $N$. Then:
$$\frac{1}{\abs{H(N)}}\sum_{f \in H(N)} \lambda_f(m) \lambda_f(n) = \sum_{\substack{d | (m, n) \\ d^2 k^2 = mn}} \frac{1}{k} + \O_\eps \left( \frac{(mn)^{1/8}}{N^{1/2}} (mnN)^{\eps} \right).$$
\end{thm}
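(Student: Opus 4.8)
The strategy is to introduce $1/L(1,\sym^2 f)$ weights so that the Petersson trace formula becomes available, then to remove these weights at the end. Recall that the harmonic-weighted sum
$$
\sum_{f \in H(N)}{}^h \lambda_f(m)\lambda_f(n) := \sum_{f \in H(N)} \frac{\lambda_f(m)\lambda_f(n)}{L(1,\sym^2 f)}
$$
(up to normalizing constants and factors of $\|f\|^2$) is, by the Petersson formula on $S_2(\Gamma_0(N))$ with $N$ prime, equal to the diagonal term $\delta_{m=n}$ plus a sum of Kloosterman terms
$$
2\pi \sum_{c \equiv 0 (N)} \frac{S(m,n;c)}{c}\, J_1\!\left(\frac{4\pi\sqrt{mn}}{c}\right).
$$
First I would write down this identity precisely, tracking the exact constant relating the harmonic average to $\frac{1}{|H(N)|}\sum$; here $|H(N)| = \dim S_2(\Gamma_0(N)) \sim N/12$ and $\psi(N) = N+1$ for $N$ prime. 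The main term $\sum_{d|(m,n),\, d^2k^2=mn} 1/k$ in the theorem comes precisely from the diagonal contribution once one accounts for the $\sqrt{n}$-normalization in $T_n f = \lambda_f(n)\sqrt{n} f$: the Hecke relation $\lambda_f(m)\lambda_f(n) = \sum_{d | (m,n)} \lambda_f(mn/d^2)$ means the "diagonal" (i.e. $mn/d^2$ a perfect square $=k^2$) picks out exactly those terms, weighted by $1/k$.

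Next I would bound the Kloosterman tail. Using the Weil bound $|S(m,n;c)| \le d(c)\, (m,n,c)^{1/2}\, c^{1/2}$ and $J_1(x) \ll \min(x, x^{-1/2})$, the $c$-sum over multiples of $N$ converges and is controlled by the smallest modulus $c = N$ (since $4\pi\sqrt{mn}/N$ is small in the regime of interest), giving a contribution
$$
\ll \frac{(mn)^{1/4}}{N} \cdot d(N)(m,n,N)^{1/2} N^{1/2} \cdot N^\eps \ll \frac{(mn)^{1/4}}{N^{1/2}}(mnN)^\eps
$$
for the harmonic average — actually one must be careful: the $J_1$ factor near the first modulus gives $\sqrt{mn}/N$, and summing $\sum_{c \equiv 0(N)} d(c) c^{-1/2}\cdot c^{-1/2}\cdot(\sqrt{mn}/c)$-type terms is what yields the clean $(mn)^{1/4}N^{-1/2+\eps}$ bound. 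I would do this sum carefully, splitting at $c \asymp \sqrt{mn}$ if $\sqrt{mn} > N$.

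The step I expect to be the crux is \textbf{removing the harmonic weights}, i.e. passing from $\sum^h$ to $\frac{1}{|H(N)|}\sum$. This is done by expanding $1/L(1,\sym^2 f)$ (equivalently $L(1,\sym^2 f)$ against the sum) as a Dirichlet series $\sum_\ell a_f(\ell)/\ell$ with $a_f(\ell)$ expressible in Hecke eigenvalues $\lambda_f$ of $\sym^2 f$, truncating the series at some length $L$, and re-applying Petersson to each resulting term $\sum^h \lambda_f(m)\lambda_f(n)\lambda_f(\ell)$ (again a Hecke relation reduces the triple product to a single $\lambda_f$). Each such application contributes a new diagonal term and a new Kloosterman tail; summing over $\ell \le L$ and optimizing $L$ (balancing the truncation error against the accumulated Kloosterman errors, which grow with $L$) is where the final exponent $1/8$ emerges rather than the naive $1/4$. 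The bookkeeping here — controlling the Euler product for $L(s,\sym^2 f)$ at $s=1$, its convexity/zero-free region, and the size of the coefficients $a_f(\ell)$ uniformly in $f$ and $N$ — is the technical heart, and the choice $L \asymp N^{1/2}/(mn)^{1/4}$ (so that the tail of size $(mn)^{1/4}L/N^{1/2}\cdot\ldots$ balances the truncation $1/L$ up to the stated powers) is what produces $(mn)^{1/8}N^{-1/2+\eps}$. Finally I would collect all error terms and absorb $d(N)$, $d(mn)$, and logarithmic factors into $(mnN)^\eps$.
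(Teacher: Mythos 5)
Your outline follows the paper's route: pass to the harmonic average, apply Petersson with the Weil bound, and remove the weights by writing $4\pi\norm{f}^2 \propto s(N)L(1,\sym^2 f)$, expanding $L(1,\sym^2 f)$ as a (truncated) Dirichlet series in $\lambda_{\sym^2 f}(\nu)$, and re-applying Petersson to each term via the Hecke relations. The diagonal bookkeeping producing $\sum_{d\mid(m,n),\,d^2k^2=mn}1/k$ is correct.

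The genuine gap is at what you yourself call the crux: controlling the truncation error $L(1,\sym^2 f)-\sum_{\nu\le x}\lambda_{\sym^2 f}(\nu)\nu^{-1}\Psi(\nu/x)$ for a series of length $x\ll N$. Your proposal to handle this via "the Euler product at $s=1$, its convexity/zero-free region" does not work, and your balancing against a putative truncation error of size $1/L$ is not what happens. The conductor of $L(s,\sym^2 f)$ is $\asymp N^2$, so an unconditional (approximate functional equation) truncation needs length $\gg N^{2}$, which destroys the Kloosterman error; and the $N$-aspect convexity bound $L(1/2+it,\sym^2 f)\ll N^{1/2+\eps}$ gives a remainder far too large for any useful $x$. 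The paper's key input is the Iwaniec--Michel Lindel\"of-on-average bound for this family, $\sum^h_{f}\abs{L(1/2+it,\sym^2 f)}^2\ll_\eps N^\eps(1+\abs{t})^8$: after a smooth Mellin cutoff and a contour shift to $\Re(s)=1/2$, this yields $\sum^h_f\abs{R(f,x)}^2\ll_\eps x^{-1}N^\eps$, i.e.\ a truncation error of size $x^{-1/2}$ \emph{on average over the family} (not $x^{-1}$, and not available for an individual $f$). Balancing $x^{-1/2}$ against the accumulated Petersson error $x^{1/2}(mn)^{1/4}N^{-1}$ forces $x=N/(mn)^{1/4}$ and produces exactly the exponent $(mn)^{1/8}N^{-1/2}$; without the second-moment input your scheme cannot reach this, and your stated choice $L\asymp N^{1/2}/(mn)^{1/4}$ with a $1/L$ truncation error would not give the claimed bound.
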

With this theorem, we can formulate and prove a corresponding uniform convergence to the Plancherel measure. Let $r \geq 1$, and for $\ell_1, \ldots, \ell_r \geq 0$, let $\mathcal{P}^{\ell_1, \ldots, \ell_r}$ denote the set of polynomials in $x_1, \ldots, x_r$ of degrees at most $\ell_1, \ldots, \ell_r$, respectively, that is,
$$\mathcal{P}^{\ell_1, \ldots, \ell_r} := \Big\{\sum_{j_1 = 0}^{\ell_1} \cdots \sum_{j_r = 0}^{\ell_r}a_{j_1, \ldots, j_r} x_1^{j_1} \cdots x_r^{j_r}\  \large \vert \  a_{j_1, \ldots, j_r} \in \C\Big\}.$$ For $p \nmid N$, let $\theta_f(p) \in [0, \pi]$ be such that 
$$\lambda_f(p) = 2 \cos (\theta_f(p))$$ (such a $\theta_f$ exists because of self-adjointness of $T_p$ and thanks to the Ramanujan bound $\abs{\lambda_f(p)} \leq 2$ due to Eichler \cite{Eichlerramanujan}).
Let $\mu_p$ be the $p$-adic Plancherel measure:
\begin{align}
d \mu_p:=\frac{2}{\pi} \cdot \dfrac{(p + 1) \sin^2 \theta}{(p^{1/2} + p^{-1/2})^2 - 4 \cos^2 \theta } d \theta.
\end{align}
We have the following uniform convergence result: 
\begin{thm}\label{thmtwo}
Let $r \geq 1$ and $\eta > 0$. Then uniformly for $p_1^{\ell_1} \cdots p_r^{\ell_r} < N^{2 -\eta}$ and $P \in \mathcal{P}^{\ell_1, \ldots, \ell_r}$, 
$$\frac{1}{\abs{H(N)}} \sum_{f \in H(N)} \abs{P\left(\cos \theta_f(p_1), \ldots, \cos \theta_f(p_r)\right)}^2 = ( 1 + o(1)) \int_0^\pi \cdots \int_0^\pi \abs{P}^2d\mu_{p_1} \dots d \mu_{p_r}$$ as $N \to \infty$. 

\end{thm}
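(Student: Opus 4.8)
The plan is to diagonalize both sides of the asserted identity by expanding in the orthonormal polynomials of the measures $\mu_{p_i}$, and then to read off the diagonal and off‑diagonal contributions from Theorem~\ref{thmone}. Throughout write $x=\cos\theta$ and let $U_n$ be the Chebyshev polynomial of the second kind, so that $U_n(\cos\theta)=\sin((n+1)\theta)/\sin\theta$; the Hecke recursion $\lambda_f(p^{n+1})=\lambda_f(p)\lambda_f(p^{n})-\lambda_f(p^{n-1})$ (valid for $p\nmid N$) together with $\lambda_f(p)=2\cos\theta_f(p)$ gives $\lambda_f(p^n)=U_n(\cos\theta_f(p))$, while multiplicativity gives $\lambda_f(\prod_i p_i^{j_i})=\prod_i\lambda_f(p_i^{j_i})$ for distinct primes $p_i\nmid N$. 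Writing $M(m,n):=\sum_{d\mid(m,n),\, d^{2}k^{2}=mn}1/k$ for the main term of Theorem~\ref{thmone}, the key local input is the identity
\[
\int_0^\pi U_a(\cos\theta)\,U_b(\cos\theta)\,d\mu_p(\theta)=M(p^a,p^b),
\]
which reduces to the Clebsch--Gordan rule $U_aU_b=\sum_{0\le i\le\min(a,b)}U_{a+b-2i}$ together with the moments $\int_0^\pi U_n\,d\mu_p=p^{-n/2}$ for $n$ even and $0$ for $n$ odd. From it one checks directly that the orthonormal polynomials of $\mu_p$ are $q^{(p)}_0=1$ and $q^{(p)}_n(x)=\sqrt{p/(p+1)}\,\big(U_n(x)-p^{-1}U_{n-2}(x)\big)$ for $n\ge 1$ (with $U_{-1}:=0$). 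The decisive point is that $q^{(p)}_n$ has \emph{at most two} nonzero coefficients in the $U$‑basis, and the $\ell^1$‑norm of its coefficient vector equals $\sqrt{(p+1)/p}\le 2$, uniformly in both $n$ and $p$.

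Now fix $P\in\mathcal P^{\ell_1,\ldots,\ell_r}$ and expand $P=\sum_{\mathbf k}b_{\mathbf k}\prod_i q^{(p_i)}_{k_i}(x_i)$ over $0\le k_i\le\ell_i$ — a basis of $\mathcal P^{\ell_1,\ldots,\ell_r}$ since $\deg q^{(p)}_n=n$. By orthonormality, $\sum_{\mathbf k}|b_{\mathbf k}|^2=\int_0^\pi\cdots\int_0^\pi|P|^2\,d\mu_{p_1}\cdots d\mu_{p_r}$. On the automorphic side, using $\lambda_f(p_i^{\,n})=U_n(\cos\theta_f(p_i))$ and multiplicativity,
\[
\frac{1}{|H(N)|}\sum_{f\in H(N)}\big|P(\cos\theta_f(p_1),\ldots,\cos\theta_f(p_r))\big|^2=\sum_{\mathbf k,\mathbf k'}b_{\mathbf k}\,\overline{b_{\mathbf k'}}\;T_{\mathbf k,\mathbf k'},
\]
where $T_{\mathbf k,\mathbf k'}=\frac{1}{|H(N)|}\sum_f\prod_i q^{(p_i)}_{k_i}(\cos\theta_f(p_i))\,q^{(p_i)}_{k'_i}(\cos\theta_f(p_i))$. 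Expanding each $q^{(p_i)}$ into $U$'s and using multiplicativity, $T_{\mathbf k,\mathbf k'}$ becomes a linear combination — of at most $4^r$ terms, with coefficients of absolute value $\le 1$ and total $\ell^1$‑mass $\ll_r 1$ — of quantities $\frac{1}{|H(N)|}\sum_f\lambda_f(m)\lambda_f(n)$ with $m,n\le\prod_i p_i^{\ell_i}<N^{2-\eta}$. Applying Theorem~\ref{thmone}: the main terms reassemble, via multiplicativity of $M$ and the local identity above, to $\prod_i\langle q^{(p_i)}_{k_i},q^{(p_i)}_{k'_i}\rangle_{\mu_{p_i}}=\delta_{\mathbf k,\mathbf k'}$; and since $mn<N^{4-2\eta}$, each error term is $\ll_\eps (mn)^{1/8}(mnN)^\eps/N^{1/2}\ll_\eps N^{-\eta/4+5\eps}$, which is $\ll_\eta N^{-\eta/8}$ once $\eps$ is fixed small in terms of $\eta$. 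Hence $|T_{\mathbf k,\mathbf k'}-\delta_{\mathbf k,\mathbf k'}|\ll_{r,\eta}N^{-\eta/8}$.

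Finally, set $E:=\big(T_{\mathbf k,\mathbf k'}-\delta_{\mathbf k,\mathbf k'}\big)_{\mathbf k,\mathbf k'}$, a symmetric matrix of size $D:=\prod_i(\ell_i+1)$. Writing $T_{\mathbf k,\mathbf k'}=\delta_{\mathbf k,\mathbf k'}+E_{\mathbf k,\mathbf k'}$ in the display above, using $\sum_{\mathbf k}|b_{\mathbf k}|^2=\int_0^\pi\cdots\int_0^\pi|P|^2\,d\mu_{p_1}\cdots d\mu_{p_r}$, and bounding $\big|\sum_{\mathbf k,\mathbf k'}b_{\mathbf k}\overline{b_{\mathbf k'}}E_{\mathbf k,\mathbf k'}\big|\le\|E\|_{\mathrm{op}}\sum_{\mathbf k}|b_{\mathbf k}|^2$, we obtain
\[
\frac{1}{|H(N)|}\sum_{f\in H(N)}|P|^2=\big(1+O(\|E\|_{\mathrm{op}})\big)\int_0^\pi\cdots\int_0^\pi|P|^2\,d\mu_{p_1}\cdots d\mu_{p_r}.
\]
It remains to bound $\|E\|_{\mathrm{op}}\le D\max_{\mathbf k,\mathbf k'}|E_{\mathbf k,\mathbf k'}|\ll_{r,\eta}D\,N^{-\eta/8}$: since $p_i\ge 2$ and $\prod_i p_i^{\ell_i}<N^{2-\eta}$ force $\sum_i\ell_i<2\log_2 N$, we get $D\ll_r(\log N)^r$, so $\|E\|_{\mathrm{op}}\ll_{r,\eta}(\log N)^r N^{-\eta/8}=o(1)$ as $N\to\infty$, uniformly over all admissible $P$, $p_i$, $\ell_i$; this is exactly Theorem~\ref{thmtwo}. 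The one genuine obstacle here is \emph{uniformity}: controlling the error by the condition number of the Gram matrix of the $U_{k_i}$ with respect to the $\mu_{p_i}$ would lose powers of the $p_i$, hence of $N$, and is useless when $\prod p_i^{\ell_i}$ is as large as $N^{2-\eta}$. What makes the argument go through is that the Plancherel orthonormal polynomials $q^{(p)}_n=\sqrt{p/(p+1)}(U_n-p^{-1}U_{n-2})$ are an explicit, uniformly bounded perturbation of the Chebyshev system, so the relevant change of basis is uniformly well conditioned and the only surviving dependence on $(p_i,\ell_i)$ is the harmless polylogarithmic factor $D$.
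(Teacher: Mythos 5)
Your proof is correct, and while it rests on the same two pillars as the paper's argument --- Theorem \ref{thmone} applied termwise after expanding $|P|^2$ in a product polynomial basis, plus the explicit inner products $\langle U_m,U_n\rangle_{\mu_p}$ --- it organizes the error analysis differently. The paper expands $P$ in the Chebyshev products $U_{t_1}\cdots U_{t_r}$, identifies the main term with $\|P\|^2_{\mu_{p_1},\ldots,\mu_{p_r}}$ via Proposition \ref{innerproduct}, and controls the error by Cauchy--Schwarz and a geometric series in $p_i^{t_i/4}$, which requires the extra step of dominating $\sum|a_{t_1,\ldots,t_r}|^2$ by $\|P\|^2_{\mu_{p_1},\ldots,\mu_{p_r}}$ through the comparison $d\mu_\infty\cdots d\mu_\infty\ll_r d\mu_{p_1}\cdots d\mu_{p_r}$ and orthonormality of the $U_n$ for Sato--Tate. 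You instead expand directly in the Plancherel-orthonormal polynomials $q^{(p)}_n=\sqrt{p/(p+1)}\,(U_n-p^{-1}U_{n-2})$, so the main term is tautologically $\sum|b_{\mathbf k}|^2=\|P\|^2$, and the error becomes the operator norm of a near-identity Gram matrix, bounded entrywise; the role of the measure comparison is played by the observation that each $q^{(p)}_n$ is a two-term combination of $U$'s with coefficients bounded uniformly in $n$ and $p$. Your correctly identified formula for $q^{(p)}_n$ and your reduction of $\int U_aU_b\,d\mu_p=M(p^a,p^b)$ to the Clebsch--Gordan rule plus the moments $\int U_{2k}\,d\mu_p=p^{-k}$ are both valid (and the latter is a genuine simplification: only the moments, i.e.\ the $m=0$ case of Proposition \ref{innerproduct}, need the residue computation). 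The trade-offs are minor: the paper's route yields the cleaner quantitative error $(p_1^{\ell_1}\cdots p_r^{\ell_r}N)^{O(\eps)}(p_1^{\ell_1}\cdots p_r^{\ell_r}/N^2)^{1/4}$, whereas your crude bound $\|E\|_{\mathrm{op}}\le D\max|E_{\mathbf k,\mathbf k'}|$ costs a harmless factor $D\ll_r(\log N)^r$; on the other hand you avoid the Sato--Tate domination step entirely. The one ingredient you assert without proof is the moment formula $\int_0^\pi U_{2k}\,d\mu_p=p^{-k}$; it is true (it is exactly $\mathcal I(2k)$ in the paper's notation) but still requires the contour-integral or recurrence argument of Proposition \ref{innerproduct}.
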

This result with an exponent of $N$ larger than $1$ (which corresponds to $mn$ going up to $N^{2 - \eps}$ in Theorem \ref{thmone}) is what is needed to settle the cutoff window for the non-backtracking random walks on Ramanujan graphs (\cite{ns}). In fact, it yields the conjectured asymptotics for the variance for these walks (see end of section \ref{sectionthree}).

Another application of Theorem \ref{thmtwo} is to multiplicities of $f$'s in a Hecke basis with given $\lambda_f(p)$'s for $p \in \{p_1, \ldots, p_r\}$. Let $s(N) := \abs{H(N)} = \mathrm{dim}\  S(N)$, so for $N$ prime, $s(N) = \floor{\frac{N+1}{12}} - 1$ when $N \equiv 1 \pmod{12}$ and $s(N) = \floor{\frac{N+1}{12}} $ otherwise. For $\phi \in S(N)$, let 
$$M_N(y, \phi) := \#\abs{ \left\{ f \in H(N) : \lambda_f(p) = \lambda_\phi(p) \text{ for } p \leq y, (p, N) = 1\right \}}.$$ 
For a fixed $y$, Theorem \ref{thmtwo} implies that uniformly in $\phi$, 
\begin{align}\label{logs}
M_N(y, \phi) \ll \frac{s(N)}{(\log N)^r},
\end{align}
where $r = \pi(y)$ is the number of primes up to $y$. 
 If $y$ is allowed to increase with $N$, then one can exploit that for $f$ in the set defining $M_N(y, \phi)$, we also have $\lambda_f(m) = \lambda_\phi(m)$ for all $y$-smooth numbers $m$ (which are numbers all of whose prime factors are at most $y$). This allows one to improve (\ref{logs}) vastly. 

Such an argument using the large sieve for Dirichlet characters is due to Linnik (\cite{linnik}). In the modular form setting, Duke and Kowalski (\cite{DK}) establish that the number of non-monomial newforms of square-free level up to $N$ that have prescribed eigenvalues $\lambda_\phi(p)$ at primes $p \leq y = (\log N)^\beta$ satisfies
$$M_{\leq N}(y, \phi)^{\#} \ll_\beta N^{10/\beta + \eps},$$ which is non-trivial for $\beta > 5$. Lau and Wu (\cite{LauWu}) show that for $y = C \log N$ with $C$ a large constant, there is $c > 0$ s.t.
$$M_N(y, \phi) \ll \exp \left( \dfrac{- c \log N}{\log \log N}\right) s(N).$$ Our interest is in smaller $y$'s, namely $y = (\log N)^\beta$ with $0  < \beta < 1$. 

\begin{thm}\label{thmthr}
Fix $\beta \in (0, 1)$. Then for $y = (\log N)^\beta$ and uniformly in $\phi$,
$$M_N(y, \phi) \leq \exp\left(-\frac{1 - \beta}{\beta} (\log N)^\beta + o\left((\log N)^\beta\right)\right) s(N)$$ as $N \to \infty$. 
\end{thm}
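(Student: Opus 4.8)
The plan is to run a detector argument of Linnik type. Write $p_1<\cdots<p_r$ for the primes up to $y$, so $r=\pi(y)$; since $N$ is prime and $y=o(N)$, each $(p_i,N)=1$. If $f$ is counted by $M_N(y,\phi)$ then $\cos\theta_f(p_i)=\cos\theta_\phi(p_i)=:x_i^{\circ}$ for all $i$, and since $\lambda_f$ is multiplicative with $\lambda_f(p^a)=U_a(\cos\theta_f(p))$ ($U_a$ the Chebyshev polynomial of the second kind), we get $\lambda_f(m)=\lambda_\phi(m)$ for every $y$-smooth $m$ coprime to $N$. For a prime $p\nmid N$ and $\ell\ge0$ let $K_\ell^{(p)}(x,w)$ be the reproducing kernel of the space of polynomials of degree $\le\ell$ for the inner product $\langle P,Q\rangle_p:=\int_0^\pi P\bar Q\,d\mu_p$ (a genuine inner product, as $\mu_p$ has infinite support; its orthogonal polynomials are the normalizations of $X_n:=U_n-p^{-1}U_{n-2}$, which one checks by identifying $\langle U_a,U_b\rangle_p$ with the $N\to\infty$ limit of $\tfrac1{|H(N)|}\sum_f\lambda_f(p^a)\lambda_f(p^b)$, i.e.\ with the main term of Theorem \ref{thmone}). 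For parameters $\ell_1,\dots,\ell_r\ge0$ chosen below, set
$$P(x_1,\dots,x_r):=\prod_{i=1}^rK_{\ell_i}^{(p_i)}(x_i,x_i^{\circ})\in\mathcal{P}^{\ell_1,\dots,\ell_r},\qquad \kappa_i:=K_{\ell_i}^{(p_i)}(x_i^{\circ},x_i^{\circ}).$$
Expanding the kernels in the $X_n$-basis and substituting $\lambda(p^a)=U_a(\cos\theta(p))$ writes $P(\cos\theta_f(p_1),\dots,\cos\theta_f(p_r))=\sum_m x_m\lambda_f(m)$ with $(x_m)$ supported on $y$-smooth $m\le D:=\prod_ip_i^{\ell_i}$, with $|x_m|\ll\prod_i\ell_i$ and at most $\prod_i(\ell_i+1)$ nonzero terms, while the reproducing property gives $\sum_m x_m\lambda_\phi(m)=P(x_1^{\circ},\dots,x_r^{\circ})=\prod_i\kappa_i=\int|P|^2\,d\mu_{p_1}\cdots d\mu_{p_r}$.

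Now expand the second moment over $H(N)$ and apply Theorem \ref{thmone} termwise (every $m,m'$ in play is $\le D$): writing $\Delta(m,m'):=\sum_{d\mid(m,m'),\,d^2k^2=mm'}\tfrac1k$,
$$\Big(\prod_i\kappa_i\Big)^2M_N(y,\phi)\ \le\ \sum_{f\in H(N)}\Big|\sum_m x_m\lambda_f(m)\Big|^2\ =\ |H(N)|\sum_{m,m'}x_m\overline{x_{m'}}\,\Delta(m,m')\ +\ \mathrm{Error}.$$
The main term equals $|H(N)|\int|P|^2\,d\mu_{p_1}\cdots d\mu_{p_r}=|H(N)|\prod_i\kappa_i$, by the same computation as above since $\prod_i\langle U_{v_{p_i}(m)},U_{v_{p_i}(m')}\rangle_{p_i}=\Delta(m,m')$, and $\mathrm{Error}\ll_\eps|H(N)|\big(\sum_m|x_m|\big)^2D^{1/4}N^{-1/2+\eps}$. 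For the choice of $\ell_i$ below, both the number of $m$'s and $\sup_m|x_m|$ are $\exp(O(\sum_i\log\ell_i))=\exp(o(\log N))=N^{o(1)}$, so $\big(\sum_m|x_m|\big)^2=N^{o(1)}$; hence provided $D\le N^{2-\eta}$ we get $D^{1/4}\le N^{1/2-\eta/4}$ and $\mathrm{Error}=o(|H(N)|)$. Since $\prod_i\kappa_i\ge1$, this yields
$$M_N(y,\phi)\ \le\ (1+o(1))\,\frac{s(N)}{\prod_{i=1}^r\kappa_i}.$$

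It remains to choose the $\ell_i$ so that $\prod_i\kappa_i$ is large, uniformly in $\phi$. The key estimate is a uniform Christoffel lower bound: for every prime $p$, every $x_0\in[-1,1]$ and every $\ell\ge1$,
$$K_\ell^{(p)}(x_0,x_0)\ \gg\ \ell,$$
with an absolute implied constant. This follows from the variational formula $1/K_\ell^{(p)}(\cos\theta_0,\cos\theta_0)=\min\{\int_0^\pi|Q|^2\,d\mu_p:\deg Q\le\ell,\ Q(\cos\theta_0)=1\}$ by testing against $Q(\cos\theta)\propto F_\ell(\theta-\theta_0)+F_\ell(\theta+\theta_0)$ with $F_\ell$ the Fejér kernel, using that the density of $\mu_p$ is bounded above by an absolute constant uniformly in $p\ge2$; one finds $\int|Q|^2\,d\mu_p\ll1/\ell$. (An exact evaluation at $x_0=0$ gives the sharp constant $\tfrac{p+1}{2p}$, but only $\gg\ell$ is needed.) Take $\ell_i:=\big\lfloor\tfrac{(2-\eta)\log N}{r\log p_i}\big\rfloor$; then $D=\prod_ip_i^{\ell_i}\le N^{2-\eta}$, each $\ell_i\sim(2-\eta)(\log N)^{1-\beta}\to\infty$, and
$$\log\prod_{i=1}^r\kappa_i\ \ge\ \sum_{i=1}^r\log(c\,\ell_i)\ =\ r\log\frac{(2-\eta)\log N}{r}\ +\ O(r)\ -\ \sum_{p\le y}\log\log p.$$
Using $r=\pi(y)=(1+o(1))\tfrac{(\log N)^\beta}{\beta\log\log N}$ one has $\log\tfrac{\log N}{r}=(1+o(1))(1-\beta)\log\log N$, while $O(r)+\sum_{p\le y}\log\log p=o((\log N)^\beta)$ (and, as a byproduct, $\sum_i\log(\ell_i+1)=o(\log N)$, confirming the $N^{o(1)}$ claims above). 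Hence $\log\prod_i\kappa_i\ge\big(\tfrac{1-\beta}{\beta}+o(1)\big)(\log N)^\beta$, and feeding this into the previous display gives the stated bound.

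I expect the main obstacle to be the uniform Christoffel estimate $K_\ell^{(p)}(x_0,x_0)\gg\ell$: it must be uniform in the evaluation point $x_0$ — precisely what keeps the final bound uniform in $\phi$ even when some $\lambda_\phi(p_i)$ vanishes — and uniform in $p\le y$ with $p\to\infty$, and together with the count $\pi(y)$ it is what produces the precise exponent $\tfrac{1-\beta}{\beta}$; fortunately it reduces to the classical Fejér-kernel estimate above once one notes the Plancherel density is uniformly bounded. A secondary, purely bookkeeping point is verifying that the combinatorial inflation from $r=\pi(y)\to\infty$ (number of $y$-smooth $m\le D$, sizes of the coefficients $x_m$) stays $N^{o(1)}$ for the chosen $\ell_i$, which holds because $\sum_i\log\ell_i=o(\log N)$.
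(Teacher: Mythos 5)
Your proposal is correct, and it reaches the theorem by a genuinely different route from the paper. The paper's proof never touches Theorem \ref{thmone}: it applies the large sieve inequality for cusp forms with coefficients $c_n=\overline{\lambda_\phi(n)}$ supported on \emph{all} $y$-smooth $n\leq N$, uses the Hoffstein--Lockhart lower bound on $|\rho_\phi(1)|^2$ to pass from the orthonormal to the Hecke normalization, handles possible degeneration of the detector via the elementary inequality $\max\{|\lambda_\phi(p^k)|,|\lambda_\phi(p^{k+1})|\}\geq 1/2$ (which guarantees one "large" eigenvalue in each block of $2^r$ smooth numbers, at the cost of a harmless $8^{-r}$), and then imports the Hildebrand--Tenenbaum asymptotic $\log\Psi(X,y)\sim\frac{y}{\log y}\log\frac{\log X}{y}$ to count smooth numbers. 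You instead feed a product of Christoffel--Darboux kernels for the Plancherel measures into the asymptotic of Theorem \ref{thmone} (whose main term is, by Proposition \ref{innerproduct} and multiplicativity of $\Delta(m,n)=\sum_{d\mid(m,n),\,d^2k^2=mn}k^{-1}$, exactly the inner product $\langle U_a,U_b\rangle_{p}$, so the main term collapses to $\prod_i\kappa_i$ by the reproducing property); the non-degeneracy issue is then absorbed into the uniform Christoffel lower bound $K_\ell^{(p)}(x_0,x_0)\gg\ell$, which your Fej\'er-kernel test function does establish since the Plancherel densities are bounded by an absolute constant for all $p\geq 2$; and the smooth-number count is replaced by the explicit product $\prod_i\ell_i$ for the balanced choice $\ell_i\approx\log N/(r\log p_i)$. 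The two detectors have the same combinatorial strength, which is why both land on the exponent $\frac{1-\beta}{\beta}$. What each buys: your argument is internal to the paper (no large sieve, no Hoffstein--Lockhart, no de Bruijn-function asymptotics) and is naturally uniform in $\phi$, including at points where $\lambda_\phi(p_i)$ vanishes; the paper's large-sieve route is more robust in that it tolerates smooth numbers up to $N$ (and beyond, with the $1+M/N$ factor) rather than $N^{2-\eta}$, and would survive a weaker form of Theorem \ref{thmone} --- though in the regime $y=(\log N)^{\beta}$, $\beta<1$, neither difference affects the leading constant. Two cosmetic points: your coefficient bound should read $|x_m|\leq C^{r}\prod_i\ell_i$ rather than $\ll\prod_i\ell_i$ (the constant per factor accumulates over the $r$ primes), which is harmless since $C^{r}=N^{o(1)}$; and one should note that $\mu_p$ is a probability measure so that $\kappa_i\geq 1$, justifying the passage from $(\prod_i\kappa_i)^2 M_N\leq |H(N)|(\prod_i\kappa_i+o(1))$ to $M_N\leq(1+o(1))s(N)/\prod_i\kappa_i$ --- which you implicitly use.
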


We apply this to a question of Serre (\cite{serre}). Assume that all $f =\sum_{n \geq 1} a(n) e(nz)  \in H(N)$ are normalized so $a(1) = 1$. The Fourier coefficients $a(n)$ are algebraic integers in a totally real field of degree $d(f)$. For $d \geq 1$, let $s(N)_d$ denote the number of $f$'s for which $d(f) = d$. Serre shows that for $d$ fixed, $s(N)_d = o(s(N))$ (see also \cite{MS}, Theorem $5$), and asks for stronger upper bounds. Theorem \ref{thmthr} implies such a bound.
\begin{thm}\label{thmfour}
Fix $d \geq 1$ and $\beta < \frac{2}{d + 2}$. Then as $N \to \infty$,  
$$s(N)_d \leq\exp( - c(d, \beta) (\log N)^\beta + o((\log N)^\beta)) s(N), $$ where $c(d, \beta) := (1 - \beta)/\beta - d/2> 0.$

\end{thm}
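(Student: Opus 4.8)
The plan is to deduce Theorem~\ref{thmfour} from Theorem~\ref{thmthr} by choosing the auxiliary point $\phi$ cleverly. The key observation is that if $f \in H(N)$ has coefficient field of degree $d(f) = d$, then all the Galois conjugates $f^\sigma$ (for $\sigma$ running over the $d$ embeddings of the coefficient field) also lie in $H(N)$, and they all have the same degree $d$. Moreover, for each prime $p \nmid N$ the Hecke eigenvalue $\lambda_f(p)$ is an algebraic integer of degree dividing $d$ lying in $[-2,2]$, so the Galois orbit of $f$ contributes a cluster of at most $d$ forms whose $p$-th eigenvalues are the (at most $d$) conjugates of $\lambda_f(p)$. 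The first step is therefore to reduce counting $s(N)_d$ to a counting problem about forms sharing eigenvalues with a fixed $\phi$: if $f$ and $\phi$ lie in the same Galois orbit then $\lambda_f(p)$ and $\lambda_\phi(p)$ are conjugate algebraic numbers of degree $\le d$ for every $p$, so they lie in a common set of size $\le d$; one then wants to bound the number of $f$ whose eigenvalue vectors at primes $p \le y$ all lie in prescribed small sets.

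The second step is to upgrade Theorem~\ref{thmthr} from an exact-match count $M_N(y,\phi)$ to a count allowing $\lambda_f(p)$ to take any of $d$ prescribed values at each $p \le y$. Concretely, for a fixed collection of points $\{x_{p,1},\dots,x_{p,d}\}\subset[-1,1]$ at each $p \le y$, the number of $f \in H(N)$ with $\cos\theta_f(p) \in \{x_{p,1},\dots,x_{p,d}\}$ for all $p \le y$ is bounded by applying Theorem~\ref{thmtwo} to the test polynomial $P(x_1,\dots,x_r) = \prod_{i=1}^r \prod_{j=1}^{d}(x_i - x_{i,j})/c_i$, suitably normalized: this $P$ vanishes on the allowed set, so $\frac{1}{|H(N)|}\sum_f |P(\cos\theta_f)|^2$ is controlled by the contribution of the remaining $f$, while the Plancherel integral $\int |P|^2 \, d\mu_{p_1}\cdots d\mu_{p_r}$ is small because each factor $\int (x - x_j)^2\,d\mu_p \le$ (a constant), giving a product bound of the shape $(C/p)^{?}$ — the point is that the degree of $P$ in $x_i$ is $d$, so $\deg P \le r d$ and the constraint $p_1^{\ell_1}\cdots p_r^{\ell_r} = \prod_{p\le y} p^d < N^{2-\eta}$ becomes $d \sum_{p \le y}\log p = d\,\theta(y) < (2-\eta)\log N$, i.e. $d\, y \lesssim (2-\eta)\log N$ by the prime number theorem — but for $y = (\log N)^\beta$ with $\beta < 1$ this is automatic for large $N$. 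Optimizing the Plancherel bound over the shape of $P$ and over how many primes one uses gives the exponential saving $\exp(-((1-\beta)/\beta)(\log N)^\beta)$ as in Theorem~\ref{thmthr}, now diminished by a factor $\exp((d/2)(\log N)^\beta + o(\cdot))$ coming from the $d$-fold enlargement of the target set (each extra allowed value at a prime $p$ costs a factor roughly $p$, and $\prod_{p\le y} p^{d-1} = \exp((d-1)\theta(y)) = \exp((d-1)(\log N)^\beta(1+o(1)))$, while one also loses a factor $d$ per Galois orbit from overcounting within orbits — a lower-order $\exp(o((\log N)^\beta))$ term). Collecting exponents yields the constant $c(d,\beta) = (1-\beta)/\beta - d/2$, which is positive precisely when $(1-\beta)/\beta > d/2$, i.e. $\beta < 2/(d+2)$, matching the hypothesis.

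In more detail, the third step is the bookkeeping: partition $H(N)$ into Galois orbits, retain only the orbits of size exactly $d$, pick one representative $\phi$ per orbit, and note $s(N)_d = d \cdot \#\{\text{orbits of size } d\}$. For each such orbit the $d$ members all satisfy $\cos\theta_f(p) \in S_p(\phi) := \{\cos\theta_{\phi^\sigma}(p)\}$, a set of size $\le d$ depending only on the orbit; so $s(N)_d \le d\sum_{\text{orbits}} (\text{number of } f \text{ with } \cos\theta_f(p)\in S_p \ \forall p\le y)$, and we want instead a bound uniform in the orbit. Here one must be slightly careful that the sets $S_p$ vary with the orbit, so the clean statement is: the number of $f\in H(N)$ whose eigenvalue vector $(\cos\theta_f(p))_{p\le y}$ lies in a given product set $\prod_{p\le y}S_p$ with $|S_p|\le d$ is $\le \exp(-c(d,\beta)(\log N)^\beta + o(\cdot)) s(N)$, and this follows from the $P$-vanishing argument above applied with that product set; then since the $f$ of degree $d$ are covered by at most $s(N)_d/d$ such product sets, but each $f$ of degree $d$ lands in exactly the product set of its own orbit — so one does not sum over orbits at all, one simply observes every degree-$d$ form lies in *some* such product set, and it suffices to bound, for each fixed product set, how many degree-$d$ forms it catches, which is again the same Plancherel estimate. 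The main obstacle is getting the constant exactly right: tracking how the enlargement from a single point to a $d$-element set at each prime, combined with the altered range constraint $\prod p^{d} < N^{2-\eta}$ and the optimization over which primes to include, produces precisely $d/2$ rather than, say, $d$ or $(d-1)/2$. This requires care with the precise form of the Plancherel moments $\int_{-1}^1 x^{2j}\,d\mu_p$ (which are $O(p^{-1})$ for $j \ge 1$ but $O(1)$ for $j = 0$, so only $d$ of the $2d$ nominal "degrees" of $P$ actually cost a factor $p$ — hence $d/2$ after the squaring is absorbed into the $1/2$ already present in $(1-\beta)/\beta$) and with the same optimization (choosing to use primes up to $y$ and balancing $\prod (C/p)$ against the trivial bound $s(N)$) that underlies Theorem~\ref{thmthr}. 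I expect the cleanest route is to reprove Theorem~\ref{thmthr}'s optimization with the test polynomial $\prod_{p\le y}\prod_{j=1}^{d}(x_p - x_{p,j})$ in place of $\prod_{p\le y} x_p$, carrying the parameter $d$ through symbolically, so that Theorem~\ref{thmthr} is the $d=1$ case and Theorem~\ref{thmfour} drops out by the Galois-orbit reduction.
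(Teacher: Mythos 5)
Your approach diverges from the paper's and has two genuine gaps. First, the localization step is set up backwards and, even when corrected, cannot deliver the required saving. A polynomial $P=\prod_i\prod_j(x_i-x_{i,j})$ that \emph{vanishes} on the allowed product set contributes nothing to $\sum_f|P(\cos\theta_f(p_1),\dots)|^2$ for exactly the forms you want to count, so Theorem \ref{thmtwo} applied to it says nothing about them. To count them you would need $P$ normalized to be large on the allowed set (a reproducing-kernel construction), and then the saving per prime is governed by the \emph{degree} of the kernel, not by the size of the target: with degree $d$ per variable the saving is at best $\exp(O(r))=\exp(o((\log N)^{\beta}))$, nowhere near $\exp(-\frac{1-\beta}{\beta}(\log N)^{\beta})$, which requires degrees of order $(\log N)^{1-\beta}$ in each variable. (The paper in fact proves Theorem \ref{thmthr} not from Theorem \ref{thmtwo} but from the large sieve together with smooth-number estimates; your plan to ``reprove the optimization with $d$ carried symbolically'' is not executed, and your own bookkeeping produces $d-1$ rather than $d/2$ before you concede the constant is not pinned down.)

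Second, and more fundamentally, the Galois-orbit reduction is circular as stated: you bound the number of degree-$d$ forms caught by \emph{each fixed} product set $\prod_{p\le y}S_p$, but to conclude anything about $s(N)_d$ you must multiply by the number of such product sets, and that number is the number of Galois orbits, i.e.\ $s(N)_d/d$ --- the quantity you are trying to bound. The missing idea, which is the heart of the paper's argument, is an \emph{a priori} upper bound on the number of possible eigenvalue tuples $(a_f(p_1),\dots,a_f(p_r))$ with $f\in H(N)_d$: each $a_f(p)$ is a totally real algebraic integer with all conjugates bounded by $2\sqrt p$, lying in one of only $O_d(y^{\kappa})$ possible fields (discriminant bounds plus Schmidt's theorem), and a lattice-point count in $\mathcal O_K$ shows there are $\ll_d p^{d/2}$ choices per prime once the field $K$ is fixed. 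Hence the number of tuples is $\exp\bigl(\tfrac d2\sum_{p\le y}\log p+o_d(y)\bigr)=\exp\bigl(\tfrac d2 y+o_d(y)\bigr)$ --- this is the true source of the $d/2$ --- and multiplying this count by the uniform multiplicity bound of Theorem \ref{thmthr} (which needs no $d$-point modification) yields Theorem \ref{thmfour} directly.
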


This falls short of Serre's conjecture, which asserts that Theorem \ref{thmfour} holds for $\beta = 1$ and $c = c(d) > 0$ (i.e., $s(N)_d \ll s(N)^{\alpha}$ for some $\alpha < 1$). \\
\\
{\textbf{Acknowledgments:}} We thank L. Alp\"oge, G. Harcos, E. Nestoridi, and Z. Rudnick for discussions related to the questions addressed in this paper.
\section{Weight Removal in the Petersson Formula}

Throughout this section we assume $N$ is prime. Let $H(N)$ denote a simultaneous eigenbasis of Hecke operators $T_k, (k, N) = 1$, acting on the space $S(N)$ of dimension $s(N)$ of weight $2$ level $N$ cusp forms for $\Gamma_0(N)$, and for $f \in H(N)$, let $a_f(n)$ and $\lambda_f(n)$ be such that
$$f(z)  = \sum_{n \geq 1} a_f(n) e(nz) = \sum_{n \geq 1} \sqrt{n}\  \lambda_f(n) e(nz),$$ where $e(z) := e^{2 \pi i z}$. Assume $f$ are normalized so $a_f(1) = 1$.

Our starting point for this section is the Petersson trace formula estimated via the Weil bound on Kloosterman sums, as presented in \cite{ILS}, Corollary $2.2$ or \cite{IK}, Corollary $14.24$: 
\begin{ptsn}\label{pts} 
With $H(N)$ as above, $(mn, N) = 1$, and $\eps > 0$, 
\begin{align}\label{petersson}
\sideset{}{^h}\sum_{f \in H(N)} \lambda_f(m) \lambda_f(n) = \delta(m, n) + \O_\eps\left( \frac{(mn)^{1/4}}{N} (mnN)^\eps\right).
\end{align}
Here the $h$ superscript signifies adding "harmonic" weights: $\sum_{f \in H(N)}^h \alpha_f = \frac{1}{4 \pi}\sum_{f \in H(N)} \frac{\alpha_f}{\norm{f}^2}$, where $\norm{\cdot}$ denotes the Petersson norm. 
\end{ptsn}

We derive Theorem \ref{thmone} from the Petersson formula by removing the harmonic weights. The Petersson norm is related to the special value of the symmetric square $L$-function at $1$ (\cite{ILS}, Lemma $2.5$) via:
$$4 \pi \norm{f}^2 = \frac{s(N)}{\zeta(2)} L(\sym^2 f, 1)$$ (where $L(\sym^2 f, s) = \zeta(2s)(1 - N^{-2s}) \sum_{n \geq 1} \lambda_f(n^2) n^{-s} := \sum_{n = 1}^\infty \lambda_{\mathrm{sym}^2 f}(n)n^{-s}$), so 

\begin{align}\label{form}
\frac{1}{s(N)}\sum_{f \in H(N)} \lambda_f(m) \lambda_f(n) &= \sideset{}{^h}\sum_{f \in H(N)} \lambda_f(m) \lambda_f(n) \cdot\frac{4 \pi \norm{f}^2 }{s(N)} =  \sideset{}{^h}\sum_{f \in H(N)} \lambda_f(m) \lambda_f(n) \frac{L(\sym^2 f, 1)}{\zeta(2)} ,
\end{align}
and to prove Theorem \ref{thmone}, we need to derive a suitable approximation for $L(\sym^2 f, s)$. 

Let $\Psi(x) \geq 0$ be a smooth function supported on $[-1, 1]$ with $\Psi(0) = 1$. The Mellin transform 
$$\tilde{\Psi}(s) = \int_0^\infty \Psi(x) x^s \frac{dx}{x}$$ is an analytic function of $s = \sigma + i t$ for $\sigma> -1$, except for a simple pole at $0$ with residue $1$, and decreases rapidly as $\abs{t} \to \infty$ for $ -1 \leq \sigma \leq 2$.

For a parameter $x>0$, let 
\begin{align*}
\textbf{A}:&= \frac{1}{2 \pi i} \int_{\sigma = 2} L(\sym^2 f, s + 1) x^s \tilde{\Psi}(s) ds \\
&=   \frac{1}{2 \pi i} \int_{\sigma = 2}\sum_{\nu \geq 1}\frac{\lambda_{\mathrm{sym}^2 f}(\nu)}{\nu^{s+1}}x^s \tilde{\Psi}(s) ds  \\
&=  \sum_{\nu \geq 1} \frac{\lambda_{\mathrm{sym}^2 f}(\nu)}{\nu } \cdot \frac{1}{2 \pi i}\int_{\sigma = 2}\left(\frac{\nu}{x}\right)^{-s} \tilde{\Psi}(s) ds  \\
&= \sum_{\nu \leq x} \frac{\lambda_{\mathrm{sym}^2 f} (\nu)}{\nu} \Psi\left( \frac{\nu}{x}\right)
\end{align*}
by the Mellin inversion theorem.
Shifting the integral defining \textbf{A} to $\Re(s) = -1/2$ picks up the simple pole of $\tilde{\Psi}$ at $s = 0$, so by the residue theorem, 

\begin{align}\label{lfunction}
L(\sym^2 f, 1) = \sum_{\nu \leq x} \frac{\lambda_{\mathrm{sym}^2 f} (\nu)}{\nu} \Psi\left( \frac{\nu}{x}\right) + R(f, x), 
\end{align} where 
$$R(f, x) := - \frac{1}{2 \pi} \int_{-\infty}^\infty L(\sym^2 f, 1/2 + i t) x^{-\frac{1}{2} + i t} \tilde{\Psi}(-1/2 + it) dt.$$
Now, by Cauchy-Schwarz,
\begin{align*}
\abs{R(f, x)}^2  &= \frac{1}{4 \pi^2  x} \abs{\int_{- \infty}^\infty L(\sym^2 f, 1/2 + it) x^{it} \tilde{\Psi}(-1/2 + it) dt}^2 \\
&\ll x^{-1} \int_{-\infty}^\infty \abs{L(\sym^2 f, 1/2 + it)}^2 \cdot \big|\tilde{\Psi}(-1/2 + it)\big| dt,
\end{align*}
so 
$$ \sideset{}{^h}\sum_{f \in H(N)} \abs{R(f, x)}^2 \ll x^{-1} \int_{-\infty}^\infty \big| \tilde{\Psi}(-1/2 + it) \big|  \sideset{}{^h}\sum_{f \in H(N)}\abs{L(1/2 + it, \sym^2 f)}^2  dt.$$
According to the Lindel\"{o}f on average result due to Iwaniec and Michel for this family of $L$-functions (\cite{IM}), 
$$  \sideset{}{^h}\sum_{f \in H(N)}\abs{L(1/2 + it, \sym^2 f)}^2  \ll_\eps N^{\eps} (\abs{t} + 1)^8, $$ so 
\begin{align}\label{remainder}
 \sideset{}{^h}\sum_{f \in H(N)} \abs{R(f, x)}^2 \ll_\eps x^{-1} N^{ \eps}.
\end{align}
Substituting (\ref{remainder}) and (\ref{lfunction}) into (\ref{form}),
\begin{align}
\frac{1}{s(N)} \sum_{f \in H(N)} \lambda_f(m) \lambda_f(n) &=  \frac{1}{\zeta(2)} \sideset{}{^h}\sum_{f \in H(N)} \lambda_f(m) \lambda_f(n)\left( \sum_{\nu \leq x} \frac{\lambda_{\mathrm{sym}^2 f} (\nu)}{\nu} \Psi\left( \frac{\nu}{x}\right) + R(f, x)\right)   \notag \\
&=\frac{1}{\zeta(2)}( \textbf{I} + \textbf{II}) , \label{expression}
\end{align} 
where 
$$\textbf{I} = \sum_{\nu \leq x} \frac{\lambda_{\mathrm{sym}^2 f} (\nu)}{\nu} \Psi\left( \frac{\nu}{x}\right)  \sideset{}{^h}\sum_{f \in H(N)}\lambda_f(m) \lambda_f(n)$$
and 
$$\abs{\textbf{II}} \leq \sideset{}{^h}\sum_{f \in H(N)} \abs{\lambda_f(m)\lambda_f(n)}\abs{R(f, x)} \ll_\eps {N^\eps}\left( \sideset{}{^h}\sum_{ \ f \in H(N)} (mn)^\eps\right)^{1/2}\left(  \sideset{}{^h}\sum_{ \ f \in H(N)} \abs{R(f, x)}^2\right)^{1/2} \ll_\eps \frac{(m n N)^\eps}{\sqrt{x}},$$
where we used Cauchy-Schwartz and (\ref{remainder}).

To estimate \textbf{I}, we use Hecke relations 
$$\lambda_f(m) \lambda_f(n) = \sum_{d | (m, n)} \lambda_f\left(\frac{mn}{d^2} \right)$$ and the formula 
$$\lambda_{\sym^2 f}(\nu) = \sum_{\substack{t^2 k = \nu \\ (t, N) = 1}} \lambda_f(k^2).$$
From this,
$$\textbf{I} = \sum_{t^2 k \leq x} \frac{\Psi\left( \frac{t^2k}{x}\right)}{t^2 k} \sum_{d | (m, n)} \sum_{f \in H(N)}^h \lambda_f\left( \frac{mn}{d^2}\right) \lambda_f \left( k^2\right),$$ so by the Petersson formula, 
\begin{align*}
\textbf{I} &= \sum_{t^2 k \leq x}\frac{\Psi\left( \frac{t^2k}{x}\right)}{t^2 k}{t^2 k } \sum_{\substack{d | (m, n) \\ mn = d^2k^2}} 1 + \O_\eps\left( \sum_{t^2 k \leq x} \frac{(mn)^\eps}{t^2 k}\frac{(mnk^2)^{1/4}}{N}(Nmn)^\eps\right)  \\
&= \left(\sum_{\substack{d | (m, n) \\ mn = d^2k^2}} \frac{1}{k} \right)  \sum_{t^2 k \leq x/k}\frac{\Psi\left( \frac{t^2k}{x}\right)}{t^2  } + \O_\eps \left( \frac{x^{1/2}(mn)^{1/4}}{N}(mnN)^\eps\right) \\
&= \zeta(2) \left(\sum_{\substack{d | (m, n) \\ mn = d^2k^2}} \frac{1}{k}\right) + \O_\eps \left( \frac{x^{1/2}(mn)^{1/4}}{N}(mnN)^\eps\right).
\end{align*}
Combining estimates of \textbf{I} and \textbf{II} with (\ref{expression}), 

$$\frac{1}{s(N)} \sum_{f \in H(N)} \lambda_f(m) \lambda_f(n)  = \sum_{\substack{d | (m, n) \\ mn = d^2k^2}} \frac{1}{k} + \O_\eps((mn N)^\eps/x^{1/2}) + \O_\eps \left( \frac{x^{1/2}(mn)^{1/4}}{N}(mnN)^\eps\right). $$
Choosing $x = \frac{N}{(mn)^{1/4}}$, we finish the proof of Theorem \ref{thmone}.

\section{Convergence to the Plancherel Product Measure}\label{sectionthree}
In this section we address Theorem \ref{thmtwo}. Fix an integer $r > 0$, let $\ell_1, \ldots, \ell_r > 0$, and let $p_1, \ldots, p_r$ be distinct primes with $(p_j, N) = 1$. 

Consider a polynomial $P(x_1, \ldots, x_r) \in \C[x_1, \ldots, x_r]$ of degree at most $\ell_i$ in $x_i$ for $1 \leq i \leq r$.
For $n \geq 0$,  let 
$$U_n(\cos \theta) := \frac{\sin((n + 1) \theta)}{\sin \theta}$$ be the $n$th Chebyshev polynomial of the second kind. $U_n$ is a degree $n$ polynomial in $\cos \theta$ with real coefficients, so we can find $a_{t_1, \ldots, t_r} \in \C$ such that 

\begin{align}\label{polycoefs}
P(x_1, \ldots, x_r) = \sum_{t_1 = 0}^{\ell_1} \cdots \sum_{t_r = 0}^{\ell_r} a_{t_1, \ldots, t_r} U_{t_1}(x_1) \cdots U_{t_r}(x_r).
\end{align}
Moreover, Hecke relations imply that for $(p, N) = 1$, $U_n(\lambda(p)) = \lambda(p^n)$. 
From this,
\begin{align*}
 \abs{P(\theta_f(p_1), \ldots, \theta_f(p_r))}^2   & = \sum_{t_1, s_1 = 0}^{\ell_1} \cdots \sum_{t_r, s_r = 0}^{\ell_r} a_{t_1, \ldots, t_r} \overline{a_{s_1, \ldots, s_r} } U_{t_1}(\theta_f(p_1)) \overline{U_{s_1}(\theta_f(p_1))} \cdots  U_{t_r}(\theta_f(p_r)) \overline{U_{s_r}(\theta_f(p_r))}= \\
 &  =\sum_{t_i, s_i}a_{t_1, \ldots, t_r} \overline{a_{s_1, \ldots, s_r} } \lambda_f(p_1^{t_1}) \overline{\lambda_f(p_1^{s_1})} \cdots  \lambda_f(p_r^{t_r}) \overline{\lambda_f(p_r^{s_r})} \\
 &  =\sum_{t_i, s_i}a_{t_1, \ldots, t_r} \overline{a_{s_1, \ldots, s_r} } \lambda_f(p_1^{t_1} \cdots p_r^{t_r}) \overline{\lambda_f(p_1^{s_1} \cdots p_r^{s_r})},
\end{align*}
so by Theorem \ref{thmone}, 

\begin{align}\label{oneplustwo}
\frac{1}{\abs{H(N)}} \sum_{f \in H(N)}  \abs{P(\theta_f(p_1), \ldots, \theta_f(p_r))}^2   &=\sum_{t_i, s_i} a_{t_1, \ldots, t_r} \overline{a_{s_1, \ldots, s_r} }\cdot  \frac{1}{\abs{H(N)}} \sum_{f \in H(N)}\lambda_f(p_1^{t_1} \cdots p_r^{t_r}) \overline{\lambda_f(p_1^{s_1} \cdots p_r^{s_r})} \notag \\ 
&= \textbf{I} + \textbf{II},
\end{align}
where 
$$\textbf{I} = \sum_{\substack{ s_i, t_i \leq \ell_i \\ m = p_1^{t_1}\cdots p_r^{t_r}\\ n = p_1^{t_1}\cdots p_r^{t_r}}} a_{t_1, \ldots, t_r} \overline{a_{s_1, \ldots, s_r}}\sum_{\substack{d |(m ,n)\\ mn = d^2 k^2}}\frac{1}{k},$$ and 

\begin{align*}
\textbf{II} &\ll_\eps \frac{1}{\sqrt{N}}\sum_{\substack{ s_i, t_i \leq \ell_i \\ m = p_1^{t_1}\cdots p_r^{t_r}\\ n = p_1^{t_1}\cdots p_r^{t_r}}} \abs{a_{t_1, \ldots, t_r} \overline{a_{s_1, \ldots, s_r}}}(mn)^{1/8} (mnN)^\eps\\
&\ll \frac{1}{\sqrt{N}}\left( \sum_{0 \leq t_i \leq \ell_i} \abs{a_{t_1, \ldots, t_r}}p_1^{t_1/8} \cdots p_r^{t_r/8} \right)^2 (p_1^{2 \ell_1} \cdots p_r^{2 \ell_r} N)^\eps = (*).
\end{align*}
Applying Cauchy-Schwartz and summing the geometric series, 
$$(*)  \leq  \frac{(p_1^{2 \ell_1} \cdots p_r^{2 \ell_r} N)^\eps }{\sqrt{N}} \hspace{-0.05in} \sum_{0 \leq t_i \leq \ell_i} p_1^{t_1/4} \cdots p_r^{t_r/4}  \hspace{-0.05in} \sum_{0 \leq t_i \leq \ell_i} \abs{a_{t_1, \ldots, t_r}}^2 \leq  (p_1^{2 \ell_1} \cdots p_r^{2 \ell_r} N)^\eps \left(\frac{ p_1^{\ell_1} \cdots p_r^{\ell_r}}{N^2}\right)^\frac{1}{4}\hspace{-0.1in} \sum_{0 \leq t_i \leq \ell_i} \abs{a_{t_1, \ldots, t_r}}^2.$$
Let $\mu_\infty(\theta):= \frac{2}{\pi} \sin^2 \theta d \theta$ be the Sato-Tate measure on $[0, \pi]$. From the definition of $d \mu_p$, it follows that
$$d \mu_\infty \cdots d \mu_\infty \ll_r d \mu_{p_1} \cdots d \mu_{p_r}.$$ Hence, from (\ref{polycoefs}) and the orthonormality of $U_n$ with respect to $d \mu_\infty$, we have 

$$ \sum_{0 \leq t_i \leq \ell_i} \abs{a_{t_1, \ldots, t_r}}^2 \ll \norm{P}^2_{\mu_{p_1}, \ldots, \mu_{p_r}}.$$ We conclude that  
$$\textbf{II} \ll_\eps  (p_1^{ \ell_1} \cdots p_r^{ \ell_r} N)^{2\eps} \left(\frac{ p_1^{\ell_1} \cdots p_r^{\ell_r}}{N^2}\right)^\frac{1}{4}\norm{P}^2_{\mu_{p_1}, \ldots, \mu_{p_r}}.$$
It remains to evaluate \textbf{I}. To interpret \textbf{I} as the integral against the Plancherel measure, we need the following observation:
\begin{prop}\label{innerproduct}
Let $m, n \geq 0$, and let $d \mu_p$ be the $p$-adic Plancherel measure. Then:  
\begin{align}\label{innerpr}
\int_0^\pi U_n(\theta) U_m(\theta) d \mu_p = 
\left[
\begin{array}{ll}
      \dfrac{p}{(p-1)} \left(\dfrac{1}{p^{\frac{\abs{m - n}}{2}}} - \dfrac{1}{p^{\frac{m + n}{2} + 1}} \right) &\text{if } m \equiv n \pmod{2} \\
    0 & \text{otherwise.}
\end{array}
\right.
\end{align}
\end{prop}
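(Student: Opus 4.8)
The plan is to write $d\mu_p$ as an explicit density times the Sato--Tate measure $d\mu_\infty=\tfrac{2}{\pi}\sin^2\theta\,d\theta$, expand that density in the orthonormal Chebyshev basis $\{U_j(\cos\theta)\}$, and then reduce the integral to the linearization formula for products of Chebyshev polynomials together with the orthonormality $\int_0^\pi U_a(\cos\theta)U_b(\cos\theta)\,d\mu_\infty=\delta_{ab}$ already invoked in the paper. First I would record the algebraic simplification: since $(p^{1/2}+p^{-1/2})^2=p+2+p^{-1}$, one has $(p^{1/2}+p^{-1/2})^2-4\cos^2\theta=\big((p+1)^2-4p\cos^2\theta\big)/p$, and using $\cos^2\theta=\tfrac12(1+\cos2\theta)$ this equals $(p^2+1-2p\cos2\theta)/p$. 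Hence $d\mu_p=g_p(\theta)\,d\mu_\infty$ with $g_p(\theta)=\dfrac{p(p+1)}{p^2+1-2p\cos2\theta}$, a Poisson-type kernel, and $\int_0^\pi U_nU_m\,d\mu_p=\int_0^\pi U_n(\cos\theta)U_m(\cos\theta)\,g_p(\theta)\,d\mu_\infty$.

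Next I would establish the expansion $g_p(\theta)=\sum_{j\ge 0}p^{-j}U_{2j}(\cos\theta)$, with uniform convergence on $[0,\pi]$. One clean way: take the even part in $t$ of the generating function $\sum_{n\ge 0}U_n(\cos\theta)t^n=(1-2t\cos\theta+t^2)^{-1}$ to get $\sum_{j\ge 0}U_{2j}(\cos\theta)\,t^{2j}=\dfrac{1+t^2}{(1+t^2)^2-4t^2\cos^2\theta}$, and substitute $t^2=1/p$; after multiplying numerator and denominator by $p^2$ and using the Step 1 identity this is exactly $g_p$. (Equivalently, use the Poisson-kernel expansion $\tfrac{1}{p^2+1-2p\cos2\theta}=\tfrac{1}{p^2-1}\big(1+2\sum_{k\ge 1}p^{-k}\cos 2k\theta\big)$ followed by $\cos 2k\theta=\tfrac12\big(U_{2k}(\cos\theta)-U_{2k-2}(\cos\theta)\big)$, which telescopes to the same coefficients.) Uniform convergence follows from $|U_j(\cos\theta)|\le j+1$ and the geometric decay of $p^{-j}$, which also licenses term-by-term integration below.

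I would then linearize and conclude. Using $g_p=\sum_{j\ge 0}p^{-j}U_{2j}$ and integrating term by term, $\int_0^\pi U_nU_m\,d\mu_p=\sum_{j\ge 0}p^{-j}\int_0^\pi U_n(\cos\theta)U_m(\cos\theta)U_{2j}(\cos\theta)\,d\mu_\infty$. The linearization identity $U_n(x)U_m(x)=\sum_{i=0}^{\min(m,n)}U_{m+n-2i}(x)$ (a telescoping product-to-sum computation with $x=\cos\theta$) together with $\int_0^\pi U_aU_b\,d\mu_\infty=\delta_{ab}$ shows that $\int_0^\pi U_nU_mU_{2j}\,d\mu_\infty=1$ precisely when $n\equiv m\pmod 2$ and $|m-n|\le 2j\le m+n$, and is $0$ otherwise. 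So the sum vanishes unless $m\equiv n\pmod 2$, in which case it is the finite geometric sum $\sum_{j=|m-n|/2}^{(m+n)/2}p^{-j}=\dfrac{p}{p-1}\Big(p^{-|m-n|/2}-p^{-(m+n)/2-1}\Big)$, the asserted value; the special case $m=n=0$ recovers $\int d\mu_p=1$ as a sanity check.

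There is no serious obstacle here: the content is the bookkeeping with parities and the summation range, plus the standard Chebyshev linearization/orthogonality facts. The one genuine observation, which makes everything transparent, is the identity $(p^{1/2}+p^{-1/2})^2-4\cos^2\theta=(p^2+1-2p\cos 2\theta)/p$, turning the Plancherel density into a Poisson kernel whose $U_{2j}$-expansion has coefficients $p^{-j}$. If anything needs care, it is only justifying the interchange of the sum over $j$ with the integral, handled by the Weierstrass $M$-test via $|U_{2j}(\cos\theta)|\le 2j+1$.
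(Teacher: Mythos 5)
Your proof is correct, but it takes a genuinely different route from the paper's. The paper reduces $\int_0^\pi U_nU_m\,d\mu_p$ via the product-to-sum identity to a single cosine integral $\mathcal{I}(|m-n|)-\mathcal{I}(m+n+2)$, converts that to a contour integral over the unit circle, and evaluates it by residues, with the general case handled by checking that both the residue sum and the claimed answer satisfy the recurrence $F(T+2)-\alpha F(T+1)+F(T)=0$ with $\alpha=p+1/p$. You instead expand the Radon--Nikodym derivative $g_p=d\mu_p/d\mu_\infty$ as the Poisson-type series $\sum_{j\ge0}p^{-j}U_{2j}$ (your algebraic identity $(p^{1/2}+p^{-1/2})^2-4\cos^2\theta=(p^2+1-2p\cos2\theta)/p$ and the even part of the Chebyshev generating function both check out), and then combine the linearization $U_nU_m=\sum_{i=0}^{\min(m,n)}U_{m+n-2i}$ with orthonormality against $d\mu_\infty$; the parity condition and the summation range $|m-n|/2\le j\le(m+n)/2$ fall out automatically, and the geometric sum $\sum_{j=|m-n|/2}^{(m+n)/2}p^{-j}$ is exactly the closed form asserted. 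Your route is more conceptual and arguably cleaner: it produces the answer directly in the "sum of $1/k$ over $d\mid(p^m,p^n)$, $d^2k^2=p^{m+n}$" form that the paper needs anyway when matching $\textbf{I}$ with $\|P\|^2$, whereas the paper's residue computation requires separate base cases $T=0,1$ and an induction via the recurrence; the paper's approach, on the other hand, needs no linearization identity and no interchange of sum and integral (which you correctly justify by the bound $|U_{2j}(\cos\theta)|\le 2j+1$ and geometric decay).
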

We leave the proof until the end of the section. From (\ref{polycoefs}), 
\begin{align}\label{prodmeasureinner}
\int_{[0, \pi]^n} \abs{P(\theta_1, \ldots, \theta_r)}^2 d \mu_{p_1} \cdots d \mu_{p_r} &= \sum_{t_i, s_i \leq \ell_i} a_{t_1} \overline{a_{s_1}} \cdots a_{t_r} \overline{a_{s_r}}\int_{[0, \pi]^n} \hspace{-0.1in} U_{t_1}(\theta_1) U_{s_1}(\theta_1)\cdots U_{t_r}(\theta_r) U_{s_r}(\theta_r) d \mu_{p_1} \cdots  d \mu_{p_r} \notag \\
&= \sum_{t_i, s_i \leq \ell_i} a_{t_1} \overline{a_{s_1}} \cdots a_{t_r} \overline{a_{s_r}} \cdot \prod_i \int_0^\pi U_{t_i}(\theta) U_{s_i}(\theta) d \mu_{p_i}.
\end{align}
We substitute the inner product (\ref{innerpr}) into (\ref{prodmeasureinner}). Observe that 
\begin{align*}
 \frac{p}{(p-1)} \left(\frac{1}{p^{\frac{\abs{m - n}}{2}}} - \frac{1}{p^{\frac{m + n}{2} + 1}} \right)  &= \frac{1}{p^{\frac{\abs{m - n}}{2}}} + \frac{1}{p^{\frac{\abs{m - n}}{2}} + 1} + \cdots + \frac{1}{p^{\frac{m + n}{2}}} \\
& =  \sum_{p^{\alpha}  | (p^m, p^n)}\frac{1}{p^{\frac{m + n}{2} - \alpha}} \\
& =  \sum_{\substack{ d  | (p^m, p^n) \\ d^2 k^2 = p^m p^n }}\frac{1}{k},
\end{align*}
so (\ref{innerpr}) implies

\begin{align*}
\norm{P}^2_{\mu_{p_1}, \ldots, \mu_{p_r}}= (\ref{prodmeasureinner}) &= \sum_{\substack{s_i, t_i \leq \ell_i \\ s_i \overset{\mathrm{mod} 2}{\equiv} t_i}} a_{t_1} \overline{a_{s_1}} \cdots a_{t_r} \overline{a_{s_r}} \cdot \prod_i \sum_{\substack{d  | (p_i^{t_i}, p_i^{s_i}) \\ d^2 k^2 = p_i^{t_i + s_i}}}\frac{1}{k} = \\
& \sum_{\substack{s_i, t_i \leq \ell_i \\ s_i \overset{\mathrm{mod} 2}{\equiv} t_i }} a_{t_1} \overline{a_{s_1}} \cdots a_{t_r} \overline{a_{s_r}} \cdot  \sum_{\substack{ d  | (p_1^{t_1} \cdots p_r^{t_r}, p_1^{s_1} \cdots p_r^{s_r} ) \\ d^2 k^2 = p_1^{t_1 + s_1} \cdots p_r^{t_r  + s_r}}}\frac{1}{k} = \textbf{I}.
\end{align*}
Combining \textbf{I} and \textbf{II}, 

\begin{align}\label{abc}
\frac{1}{\abs{H(N)}} \sum_{f \in H(N)}  \abs{P(\theta_f(p_1), \ldots, \theta_f(p_r))}^2  &= \norm{P}^2_{\mu_1, \ldots, \mu_r} \left( 1 + \O\left((p_1^{ \ell_1} \cdots p_r^{ \ell_r} N)^{2\eps} \left(\frac{ p_1^{\ell_1} \cdots p_r^{\ell_r}}{N^2}\right)^\frac{1}{4}\right)\right). \notag \\
\end{align}
Suppose now that the conditions of Theorem \ref{thmtwo} are met, i.e., 
$$p_1^{\ell_1} \cdots p_r^{\ell_r} < N^{2 - \eta}$$ for some $\eta > 0$. Then (\ref{abc}) is equal to
$$ \norm{P}^2_{\mu_1, \ldots, \mu_r} ( 1 + \O(N^{6 \eps} \cdot N^{-\eta/4}))= \norm{P}^2_{\mu_1, \ldots, \mu_r} ( 1 + o(1))$$ for small enough $\eps$. This concludes the proof of Theorem \ref{thmtwo}.

\begin{proof}[Proof of Proposition \ref{innerproduct}]

From the trigonometric identity for the product of sines,
\begin{align*}
\int_0^\pi U_n(\theta) U_m(\theta) d \mu_p &= \frac{ p + 1}{ 2 \pi } \int_0^\pi \frac{\sin((n + 1) \theta) \sin((m + 1)\theta)}{\left(\frac{p^{1/2}+p^{-1/2}}{2}\right)^2 -  \cos^2 \theta}d\theta \\
&= \frac{p + 1}{4\pi} \int_0^\pi \frac{\cos((m - n)\theta) - \cos ((m + n + 2)\theta)}{\frac{(p-1)^2}{4p} +  \sin^2 \theta}d\theta\\
&= \mathcal{I}(\abs{m - n}) - \mathcal{I}(m + n + 2),
\end{align*}
where $\mathcal{I}(k) := \frac{p + 1}{4 \pi} \int_0^\pi  \frac{\cos(k \theta)}{\frac{(p-1)^2}{4p} +  \sin^2 \theta}d\theta$.

Since $\sin^2(\theta) = \sin^2(\pi - \theta)$ and $\cos(k(\pi - x)) = - \cos kx$ for odd $k$, $\mathcal{I}(k) = 0$ for odd $k$, so the integral is $0$ when $m$ and $n$ have different parity. To prove the proposition, it suffices to show that for all integers $T \geq 0$, 
\begin{align}\label{induction}
\mathcal{I}(2T) = \frac{(p + 1)}{4\pi} \int_0^\pi \frac{\cos (2T\theta)}{\frac{(p - 1)^2}{4p} + \sin^2 \theta} d\theta= \frac{p}{(p-1) p^T}.
\end{align}
We prove this statement by induction.  Let $\zeta := e^{ix}$, $c := \frac{(p - 1)^2}{4p}$, and let $\alpha := 2 + 4c = p + 1/p$.  Then: 
\begin{align*}
\int_0^\pi \frac{\cos (2T\theta)}{\frac{(p - 1)^2}{4p} + \sin^2 \theta} d\theta  &= \frac{1}{2} \int_0^{2\pi} \frac{(\zeta^{2T} + \zeta^{-2 T})/2}{c+ ((\zeta - \zeta^{-1})/2i)^2 } dx  = \\
&=-\int_0^{2\pi}\frac{(\zeta^{2T} + \zeta^{-2 T})}{-4c+ (\zeta - \zeta^{-1})^2 } dx  \\
&=-\int_{S^1}\frac{(\zeta^{4T} + 1)}{\zeta^{2T-1}(\zeta^4 - \alpha \zeta^2 + 1)} d\zeta \\
&=-\int_{S^1}\frac{(\zeta^{4T} + 1)}{\zeta^{2T-1}(\zeta^2 - p)(\zeta^2 - 1/p)} d\zeta.
\end{align*}

We evaluate the integral using the residue theorem. For $T = 0$, the poles are at $\pm \sqrt{1/p}$, and both residues are equal to $\frac{1/\sqrt{p}}{(p - 1/p) \cdot 2/\sqrt{p}} =\frac{1}{p - 1/p}$, so $$\mathcal{I}(0) = (p + 1) \cdot 2 \pi(2p/(p^2-1) )/4 \pi = p/(p-1).$$ For $T = 1$, the pole at $0$ has residue $-1$ and the poles at $\pm 1/\sqrt{p}$ have residues $\frac{1/p^2 + 1}{(1/\sqrt{p})(p - 1/p) (2/\sqrt{p})} = \frac{p^2 + 1}{2(p^2 - 1)}$, so $$\mathcal{I}(2) = (p + 1) \cdot 2\pi(-1 + (p^2 + 1)/(p^2 - 1))/4 \pi = 1/(p-1).$$
Assume now $T \geq 2$. The rational function $- \dfrac{x^{4T} + 1}{x^{2T - 1}(x^2 - p)(x^2 - 1/p)}$ has three poles inside the unit circle: $0, \omega = 1/\sqrt{p}$ and $-\omega$, and the two latter ones have the same residue. Let $A(T)$, $B(T)$ be the residues at $0$ and $\omega$ respectively. Then:
$$B(T) = - \dfrac{(\omega^2)^{2T} + 1}{2(\omega^2)^T(\omega^2 - p)} = \frac{1}{2(p - 1/p)}(p^T + 1/p^T),$$
and $A(T)$ is the coefficient of $x^{2T - 2}$ in $1/(1 - \alpha x^2 + x^4) = \sum_{r \geq 0} (x^4 - \alpha x^2)^r$. 

Notice that both $A(T)$ and $B(T)$ satisfy the recurrence relation 
$$F( T + 2) - \alpha F(T + 1) + F(T) = 0.$$ It remains to notice $\dfrac{p}{(p-1) p^T}$ satisfies the same recurrence relation, which proves (\ref{induction}).
\end{proof}

To end this section, we apply Theorem \ref{thmtwo} to the question of sharp cutoff of random walks on certain Ramanujan graphs. Let $\ell$ be a fixed prime and $p$ a large prime, $p \equiv 1 \pmod{12}$ (the notation here is made to conform with \cite{cgl}). The Brandt-Ihara-Pizer "super singular isogeny graphs," $G(p, \ell)$, are $d := \ell + 1$ regular graphs on $n := \frac{p-1}{12} + 1$ vertices (see \cite{cgl}, page $4$, for a description). The non-trivial eigenvalues of $G(p, \ell)$ are the numbers $2\sqrt{\ell}\cos(\theta_f(\ell)) $ for $f \in H(N)$ ($N = p$ in our notation). The $G(p, \ell)$'s are $d$-regular Ramanujan graphs on $n$ vertices. The $L^2$-variance, $W_2(t)$, for the $t$-step non-backtracking random walk on $G(p, \ell)$ is given by (see \cite{ns}, page $13$)
$$W_2(t) = \frac{\ell^t}{n} \sum_{f \in H(N)} \abs{R_t(\cos \theta_f(\ell))}^2,$$ where $R_t$ is the $t^\text{th}$ orthogonal polynomial on $[0, \pi]$ with respect to $d \mu_\ell$, normalized so that 
$$\int_0^\pi \abs{R_t(\theta)}^2 d \mu_\ell(\theta) = \frac{\ell + 1}{\ell}.$$ Applying Theorem \ref{thmtwo} with $r = 1$, $p_1 = \ell$, and $\ell_1 = t$ yields that uniformly for $t < (2 - \eta) \log_\ell n$, 
$$W_2(t) \sim ( \ell + 1) \ell^{t - 1} \text{ as } n \to \infty.$$ Note that $N(t)$, the number of non-backtracking walks of length $t$, is $(\ell + 1) \ell^{t-1}$, so that 
$$W_2(t) \sim N(t) \text{ for } t < ( 2 - \eta) \log_\ell n.$$ This proves conjecture $1.8$ in \cite{ns} for graphs $G(p, \ell)$. For the application to bounded window cutoff one needs $t$ to be as large as $(1 + \eps) \log_\ell n$, which is provided by the key doubling of the degree of $P$ in Theorem \ref{thmtwo}. In order to prove Conjecture $1.8$ in \cite{ns} for the more general Ramanujan graphs constructed using modular forms, one would need to identify the images of division algebras in $H(N)$ under the Jacquet-Langlands correspondence and restrict the sums in Theorem $2$ to those forms.

\section{\bfseries{{Multiplicity of Eigenvalue Tuples}}}\label{multipli}
Recall that for a fixed prime level $N$ and $\phi \in S(N)$ a weight $2$ holomorphic cusp form for $\Gamma_0(N)$, we let $M_N(y, \phi)$ be the multiplicity of the tuple of eigenvalues of $\phi$ at primes up to $y$ in a Hecke basis $H(N)$, i.e. 
$$M_N(y, \phi) := \#\abs{ \left\{ f\in H(N) : \lambda_f(p) = \lambda_\phi(p) \text{ for } p \leq y, (p, N) = 1\right \}}.$$ In this section we bound $M_N(y, \phi)$ uniformly in $\phi$ in the range $y = (\log N)^\beta$ for a fixed $\beta \in (0, 1)$. Specifically, we prove Theorem \ref{thmthr} via the large sieve and smooth number estimates.

From now on, we assume $y = o(\log N)$. We let $p_1, \ldots, p_r$ denote the first $r$ prime numbers, where $r = \pi(y)$ is the number of primes up to $y$. 

An integer $m$ is called \textit{$y$-smooth} if all primes $p | m$ satisfy $p \leq y$. The set of $y$-smooth numbers is denoted with $\mathcal{S}_y$, and the de Bruijn function $\Psi(y, M)$ is the counting function for $y$-smooth numbers up to $M$: 
$$\Psi(y, M): = \# \abs{\{m \in \mathcal{S}_y, m \leq M\}}.$$ We use the large sieve inequality as in \cite{IK}, Theorem $7.26$ (the inequality is stated in \cite{IK} for weight $k > 2$ but holds for $k = 2$ as well -- see comment after the proof): 

\begin{sv}
Let $\mathcal{F}$ be an orthonormal basis of $S(N)$, $f(z) := \sum \rho_f(n) e(nz)$ for $f \in \mathcal{F}$. Then for any complex numbers $\{c_n\}$ we have
\begin{align}\label{sieve}
\sum_{f \in \mathcal{F}} \abs{\sum_{n \leq M} \frac{c_n \rho_f(n)}{\sqrt{n}}}^2  \ll (1 + M/N) \norm{c}^2,
\end{align} where $ \norm{c}^2 = \sum_{n \leq M } \abs{c_n}^2$ and the implied constant is absolute. 
\end{sv}
We apply this with $M= N$ and 
\begin{align*}
c_n :=
\begin{cases} \overline{\lambda_\phi(n)} &\text{ if } n \in \mathcal{S}_y \\
0 &\text{ otherwise.}
\end{cases}
\end{align*}
Using that $\rho_f(n) = \sqrt{n} \cdot \lambda_f(n)  \rho_f(1)$ and the definition of $\S_y$, we have
$$M_N(y, \phi) \abs{\rho^2_\phi(1)} \Big|\sum_{\substack{ n \leq  N\\ n \in \S_y}}\abs{\lambda_\phi(n)}^2\Big|^2 \hspace{-0.05in}   \leq  \hspace{-0.3in} \sum_{\substack{f \in \mathcal{F}:\\ \lambda_f(m) =\lambda_\phi(m) \\ \text{for }m \in \S_y, m \leq N}} \hspace{-0.25in} \Big|\rho_\phi(1) \sum_{\substack{ n \leq  N\\ n \in \S_y}}\abs{\lambda_\phi(n)}^2\Big|^2 \hspace{-0.05in}    \leq   \sum_{f \in \mathcal{F}} \Big|\sum_{\substack{n \leq N\\ n \in \S_y } }\rho_f(1) \lambda_f(n) \overline{\lambda_\phi(n)}\Big|^2  \overset{\text{(\ref{sieve}) }}{\ll} \sum_{\substack{n \leq N\\ n \in \S_y}} \abs{\lambda_\phi(n)}^2,$$ which, combined with the Hoffstein-Lockhart estimate $\abs{\rho^2_\phi(1)} \gg N^{-1 }(\log N)^{-2}$ (\cite{HL}) yields

\begin{align}\label{sizeofm}
M_N(y, \phi)  \ll N(\log N)^2/ \sum_{\substack{n \leq N\\ n \in \S_y}} \abs{\lambda_\phi(n)}^2.
\end{align}
To prove Theorem \ref{thmthr}, we bound $\sum_{\substack{n \leq N\\ n \in \S_y}} \abs{\lambda_\phi(n)}^2$ away from $0$ uniformly in $\phi$. We use the following fact:

\begin{prop}
Let $k, x \in \R$. Then: 
\begin{align}\label{sins}
\max \left\{\abs{\sin kx/\sin x}, \abs{\sin ((k+1)x)/\sin x} \right\} \geq  1/2.
\end{align}
(where the functions are extended continuously to the $x$ with $\sin x  = 0$).
\end{prop}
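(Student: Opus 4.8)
The plan is to argue by contradiction: suppose that for some real $x$ (with $\sin x \neq 0$; the degenerate case is handled by continuity of the Chebyshev polynomials $U_{k-1}, U_k$) we have both $|\sin kx| < \tfrac12 |\sin x|$ and $|\sin((k+1)x)| < \tfrac12|\sin x|$. The natural tool is the product-to-sum (or sum-to-product) identity
$$\sin((k+1)x) + \sin((k-1)x) = 2\cos x \, \sin kx,$$
together with its companion
$$\sin((k+1)x) - \sin((k-1)x) = 2 \cos(kx)\sin x.$$
The first identity lets me express $\sin((k-1)x) = 2\cos x \sin kx - \sin((k+1)x)$; applying the assumed bounds and $|\cos x| \leq 1$ gives $|\sin((k-1)x)| < |\sin x| + \tfrac12|\sin x| = \tfrac32|\sin x|$, which is too weak. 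So instead I would aim the second identity at the quantity I actually want to bound from below. Rearranging, $2\cos(kx)\sin x = \sin((k+1)x) - \sin((k-1)x)$, so $|\cos(kx)|\,|\sin x| \leq \tfrac12(|\sin((k+1)x)| + |\sin((k-1)x)|)$. This still reintroduces $\sin((k-1)x)$, so a single-step identity is not enough; the cleaner route is to use the three-term recursion directly.

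Here is the approach I would actually carry out. Write $s_j := \sin(jx)$. The recursion $s_{j+1} = 2\cos x \cdot s_j - s_{j-1}$ means $s_{j-1} = 2\cos x \cdot s_j - s_{j+1}$, i.e. I can run it backwards. Assume for contradiction $|s_k| < \tfrac12 |s_1|$ and $|s_{k+1}| < \tfrac12|s_1|$. Then, running the recursion downward from the pair $(s_{k+1}, s_k)$ using $|2\cos x| \leq 2$, I get $|s_{k-1}| \leq 2|s_k| + |s_{k+1}| < \tfrac32|s_1|$, then $|s_{k-2}| \leq 2|s_{k-1}| + |s_k| < \tfrac72|s_1|$, and in general the bounds grow, so a naive descent does not close the loop. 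The fix is to track the right invariant. Recall the Turán-type / Wronskian identity for Chebyshev polynomials: with $U_j(\cos x) = s_{j+1}/s_1$ one has
$$U_k(\cos x)^2 - U_{k-1}(\cos x)\,U_{k+1}(\cos x) = 1,$$
equivalently
$$s_{k+1}^2 - s_k s_{k+2} = s_1^2.$$
More symmetrically, $s_{k+1}^2 + s_k^2 - 2\cos x\, s_k s_{k+1} = s_1^2$ (this is the standard conserved quantity of the linear recursion, and can be verified by checking it is recursion-invariant and evaluating at $j=0$). Now under the contradiction hypothesis, $|s_k|, |s_{k+1}| < \tfrac12|s_1|$, so
$$s_1^2 = s_{k+1}^2 + s_k^2 - 2\cos x\, s_k s_{k+1} \leq s_{k+1}^2 + s_k^2 + 2|s_k||s_{k+1}| = (|s_k| + |s_{k+1}|)^2 < (|s_1|)^2,$$
a contradiction. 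Hence at least one of $|s_k/s_1|, |s_{k+1}/s_1|$ is $\geq \tfrac12$, which is exactly (\ref{sins}) after relabelling ($|\sin kx/\sin x|$ and $|\sin((k+1)x)/\sin x|$ are $|U_{k-1}|$ and $|U_k|$ evaluated at $\cos x$).

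The main obstacle is pinning down the correct conserved quantity of the recursion and verifying the constant on the right-hand side is $s_1^2$ (not something that degenerates). I would establish $s_{k+1}^2 + s_k^2 - 2\cos x\, s_ks_{k+1} = s_1^2$ either by the telescoping/invariance check just mentioned or, more transparently, by the trigonometric identity
$$\sin^2((k+1)x) + \sin^2(kx) - 2\cos x \sin((k+1)x)\sin(kx) = \sin^2 x,$$
which follows from expanding $\sin((k+1)x) = \sin(kx)\cos x + \cos(kx)\sin x$ and simplifying (the cross terms cancel and one is left with $\cos^2(kx)\sin^2 x + \sin^2(kx)\sin^2 x = \sin^2 x$). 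Once this identity is in hand the inequality is immediate as above. Finally, for the points where $\sin x = 0$ one notes $U_{k-1}(\pm 1)^2 + U_k(\pm1)^2 \geq 1$ trivially since $U_j(\pm1) = (\pm1)^j (j+1)$, so in fact one of the two values is $\geq 1 \geq \tfrac12$, and the extended inequality holds there too.
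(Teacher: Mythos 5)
Your argument is correct, and it reaches the goal by a genuinely different route than the paper. Both proofs ultimately rest on the addition formula $\sin((k+1)x)=\sin(kx)\cos x+\cos(kx)\sin x$ and both in effect establish the stronger inequality $\abs{\sin kx}+\abs{\sin((k+1)x)}\geq\abs{\sin x}$, but the packaging differs: the paper sets $\abs{\sin kx/\sin x}=\eps$, expands $\sin((k+1)x)$ directly, and minimizes $f_\eps(t)=\sqrt{1-\eps^2+\eps^2t^2}-\eps t$ over $t\in[0,1]$ to get $\abs{\sin((k+1)x)/\sin x}\geq 1-\eps$; you instead isolate the conserved quadratic form $\sin^2((k+1)x)+\sin^2(kx)-2\cos x\,\sin((k+1)x)\sin(kx)=\sin^2 x$ (which checks out, for all real $k$ and $x$) and derive a one-line contradiction from $s_1^2\leq(\abs{s_k}+\abs{s_{k+1}})^2<s_1^2$. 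Your version avoids the case split on $\eps\geq 1$ and the calculus step entirely, which is a genuine simplification; the paper's version makes the quantitative dependence $\abs{U_k}\geq 1-\abs{U_{k-1}}$ explicit, which is the same information in a different form. One small caveat: your treatment of the $\sin x=0$ case via $U_j(\pm1)=(\pm1)^j(j+1)$ implicitly takes $k$ to be a nonnegative integer, whereas the proposition is stated for $k\in\R$ (the paper's own handling of this edge case is equally casual, writing $\max\{k,k+1\}\geq 1/2$ where it means $\max\{\abs{k},\abs{k+1}\}\geq 1/2$); since the continuous extension at $x=0$ is $k$ and $k+1$ and $\abs{k}+\abs{k+1}\geq 1$, the degenerate case is harmless either way, and in the paper's application $k$ is an integer. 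The exploratory false starts at the beginning of your write-up (the naive downward recursion) can simply be deleted from a final version.
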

\begin{proof}
If $\sin x = 0$, (\ref{sins}) is true since $\max\{k, k + 1\} \geq 1/2$, so assume $\sin x \neq 0$. Let $\frac{\sin^2kx}{\sin^2 x} = \eps^2$, where $0 \leq \eps < 1$ (since if $\eps \geq 1$, (\ref{sins}) clearly holds). Then:  
$$\abs{\cos kx} = \sqrt{1 - \eps^2 \sin^2x } = \sqrt{1 - \eps^2 + \eps^2 \cos^2 x},$$ so by the trigonometric identity for sine of a sum,
\begin{align*}
\abs{\frac{\sin (kx + x)}{\sin x}} &= \abs{\frac{\sin kx }{\sin x}\cos x + \cos kx} \geq  \abs{\cos kx} -\eps \abs{\cos x } =\sqrt{1 - \eps^2 + \eps^2 \abs{\cos x}^2} - \eps\abs{\cos x}.
\end{align*} 
It remains to minimize $f_\eps(t): = \sqrt{1 - \eps^2 + \eps^2 t^2} - \eps t$ for $t \in [0, 1]$. The function $f_\eps(t)$ has a non-vanishing derivative in this interval when $\eps < 1$, so 
$$\abs{\frac{\sin ((k+1)x)}{\sin x}} \geq \min\{f_\eps(0), f_\eps(1) \} = \min \{\sqrt{1 - \eps^2}, 1 - \eps \} = 1 - \eps,$$ and so 
$$\max \left\{\abs{\frac{\sin kx}{\sin x}}, \abs{\frac{\sin ((k+1)x)}{\sin x}} \right\}  \geq \max\{\eps, 1 - \eps\} \geq 1/2.$$
\end{proof}
Using the Hecke relation $$\lambda_\phi(p^k) = \frac{\sin((k + 1)\theta_\phi(p))}{\sin( \theta_\phi(p))},$$ (\ref{sins}) implies that 
$$\max\{\abs{\lambda_\phi(p^k)}, \abs{\lambda_\phi(p^{k + 1})}\} \geq 1/2$$ for all $\phi$ and $k \geq 0$. Since $\lambda_\phi(n)$ is multiplicative, for any $r$-tuple $(\alpha_1, \ldots, \alpha_r)$ of non-negative integers, there are $(\delta_1, \ldots, \delta_r)\subset \{0, 1\}^r$ such that 
\begin{align}\label{fourminusr}
\abs{\lambda_\phi(p_1^{2\alpha_1 + \delta_1} \cdots p_r^{2 \alpha_r + \delta_r})}^2\geq 4^{-r}.
\end{align} The set $\S_y$ of all $y$-smooth numbers is a disjoint union of sets
$$\mathcal{E}_{\alpha_1, \ldots, \alpha_r} := \{p_1^{2\alpha_1 + \delta_1} \cdots p_r^{2 \alpha_r + \delta_r}| \delta_i \in \{0, 1\}\}$$ of size $2^r$,  and (\ref{fourminusr}) implies that each $\mathcal{E}_{\alpha_1, \ldots, \alpha_r}$ contains an element $t$ with $\abs{\lambda_\phi(t)}^2 \geq 4^{-r}$. Moreover, for every $s \in S_y \cap [0, N/(p_1 \cdots p_r)]$, the set $\mathcal{E}_\alpha$ containing $s$ is fully contained in $\S_y \cap [0, N]$. Hence, there are at least $\Psi[y, N/(p_1 \cdots p_r)]/2^r$ sets $\mathcal{E}_\alpha$ fully contained in $\S_y \cap [0, N]$, so
\begin{align}\label{totoo}
\sum_{\substack{n \leq N\\ n \in \S_y}} \abs{\lambda_\phi(n)}^2 \gg 8^{-r} \Psi\left(y, \frac{N}{\prod_{p < y} p}\right) = 8^{-r} \Psi\left(y, \frac{N}{N^{\frac{( 1 + o(1))y}{\log N}}}\right),
\end{align}
where we used the prime number theorem in the second step. We use a result of Hildebrand and Tenenbaum on the size of $\Psi(y, X)$ in the range $y = o(\log X)$:
\begin{est}[{\cite{hild}, Corollary 1}]\label{hil}
 \ \\
Let $y = o(\log X)$ such that $y \to \infty$ as $X \to \infty$. Let $$\alpha(X, y):= (1 + o(1))\frac{y}{\log X \log y},$$ and $$\zeta(\alpha, y):= \prod_{p \leq y}(1 - p^{-\alpha})^{-1}.$$ Then: 
\begin{align}\label{h}
\Psi(X, y) = (1 + o(1)) X^\alpha \zeta(\alpha, y)\sqrt{\log y/(2 \pi y)}.
\end{align}
\end{est}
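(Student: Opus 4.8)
The plan is to follow the saddle-point (Perron/Mellin) method of Hildebrand and Tenenbaum. The Dirichlet series generating the $y$-smooth numbers is exactly
$$\zeta(s,y) \;=\; \sum_{n\in\mathcal{S}_y} n^{-s} \;=\; \prod_{p\le y}\bigl(1-p^{-s}\bigr)^{-1},$$
holomorphic and nonvanishing for $\mathrm{Re}(s)>0$, so Perron's formula gives, for any $\sigma>0$,
$$\Psi(X,y) \;=\; \frac{1}{2\pi i}\int_{\sigma-i\infty}^{\sigma+i\infty}\zeta(s,y)\,\frac{X^s}{s}\,ds$$
(in truncated or smoothed form for convergence). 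First I would fix $\sigma=\alpha$ to be the saddle point, i.e.\ the unique positive root of
$$-\frac{\zeta'}{\zeta}(\alpha,y) \;=\; \log X, \qquad\text{equivalently}\qquad \sum_{p\le y}\frac{\log p}{p^\alpha-1} \;=\; \log X,$$
at which the modulus of the integrand on the line $\mathrm{Re}(s)=\alpha$ is maximal.

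Next comes the local analysis at the saddle. On the short central arc $s=\alpha+it$, $|t|\le\delta$, a Taylor expansion of $\log\!\bigl(X^s\zeta(s,y)\bigr)$ about $t=0$ has vanishing linear term (by the saddle equation) and quadratic coefficient $-\tfrac12\phi_2(\alpha)$, where $\phi_2(\alpha):=\sum_{p\le y}\frac{(\log p)^2 p^\alpha}{(p^\alpha-1)^2}$, together with a cubic remainder that is controlled once $\delta$ is chosen suitably (so that $\delta^2\phi_2(\alpha)\to\infty$ while the cubic contribution stays $o(1)$). Since $1/s=(1+o(1))/\alpha$ is essentially constant on this arc, integrating the resulting Gaussian produces the main term
$$\frac{X^\alpha\,\zeta(\alpha,y)}{\alpha\,\sqrt{2\pi\,\phi_2(\alpha)}}\,\bigl(1+o(1)\bigr).$$

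The remaining and hardest ingredient is to show that the rest of the contour contributes negligibly. Here one uses the identity
$$\log\frac{\bigl|\zeta(\alpha+it,y)\bigr|}{\zeta(\alpha,y)} \;=\; \sum_{p\le y}\sum_{k\ge1}\frac{\cos(kt\log p)-1}{k\,p^{k\alpha}} \;\le\; 0$$
and must show this quantity is large and negative on the medium range of $t$, so that $|\zeta(\alpha+it,y)|$ drops well below $\zeta(\alpha,y)$, while on the far tails the trivial bound $|\zeta(\alpha+it,y)|\le\zeta(\alpha,y)$ together with the factor $1/|s|$ already suffices. Making this quantitative — extracting uniform cancellation from $\sum_{p\le y}\cos(kt\log p)$ across the relevant ranges of $t$ and $\alpha$ — is the main obstacle; it is the technical core of the Hildebrand–Tenenbaum argument, and it delivers the saddle-point estimate with relative error $O\!\bigl(1/u+(\log y)/y\bigr)$, where $u:=\log X/\log y$.

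Finally I would specialize to the regime $y=o(\log X)$ with $y\to\infty$. Its crucial feature is that $\alpha\to0$ with $\alpha\log y\to0$, so that $p^\alpha-1=(1+o(1))\alpha\log p$ uniformly for $p\le y$. The saddle equation then reads $(1+o(1))\,\pi(y)/\alpha=\log X$, hence $\alpha=(1+o(1))\,\pi(y)/\log X=(1+o(1))\,y/(\log X\log y)$ by the prime number theorem — matching the stated $\alpha(X,y)$ — and the same approximation gives $\phi_2(\alpha)=(1+o(1))\,\pi(y)/\alpha^2=(1+o(1))\,y/(\alpha^2\log y)$, whence $\alpha\sqrt{2\pi\phi_2(\alpha)}=(1+o(1))\sqrt{2\pi y/\log y}$ and the prefactor $1/\!\left(\alpha\sqrt{2\pi\phi_2(\alpha)}\right)$ equals $(1+o(1))\sqrt{\log y/(2\pi y)}$. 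Since moreover $1/u=\log y/\log X=o(1)$ and $(\log y)/y=o(1)$ in this range, the error term in the saddle-point estimate is $o(1)$, and one obtains $\Psi(X,y)=(1+o(1))\,X^\alpha\zeta(\alpha,y)\sqrt{\log y/(2\pi y)}$, as claimed.
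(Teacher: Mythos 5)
The paper does not prove this statement at all --- it is imported verbatim as Corollary~1 of Hildebrand--Tenenbaum \cite{hild} --- so there is no internal proof to compare against; what you have written is a reconstruction of the cited source's own saddle-point argument, and it is the right reconstruction. Your setup is correct on every point I can check: $\zeta(s,y)$ is indeed holomorphic and nonvanishing for $\Re(s)>0$, the saddle equation $\sum_{p\le y}\log p/(p^\alpha-1)=\log X$ and the second-derivative formula for $\phi_2(\alpha)$ are right, and --- most importantly for the paper's purposes --- your specialization to the regime $y=o(\log X)$ is carried out completely and correctly: $\alpha\log y\to 0$ forces $p^\alpha-1=(1+o(1))\alpha\log p$, whence $\alpha=(1+o(1))\pi(y)/\log X$, $\phi_2(\alpha)=(1+o(1))\pi(y)/\alpha^2$, and the prefactor $1/(\alpha\sqrt{2\pi\phi_2(\alpha)})=(1+o(1))\sqrt{\log y/(2\pi y)}$, with the error $O(1/u+(\log y)/y)=o(1)$ exactly because $y\to\infty$ and $y=o(\log X)$. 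The one genuine hole, which you candidly flag rather than paper over, is the minor-arc bound: showing that $|\zeta(\alpha+it,y)|/\zeta(\alpha,y)$ is uniformly small enough off the central arc that the tails of the Perron integral are negligible. That estimate is the technical core of Hildebrand--Tenenbaum and is asserted, not proved, in your write-up; as a self-contained proof the argument is therefore incomplete, but since the paper itself delegates the entire statement to \cite{hild}, your treatment is at least as complete as the paper's and correctly identifies where the remaining work lives.
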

From (\ref{h}), 
\begin{align}\label{psi}
\log \Psi(X, y) = \alpha \log X + \log \zeta(\alpha, y) + \O(\log y) = \log \zeta(\alpha, y) + \O(y/\log y),
\end{align}
and using the Taylor series expansion,
\begin{align}
\log \zeta(\alpha, y) &= - \sum_{p \leq y} \log(1 - e^{-\alpha \log p}) \notag \\
&= -\sum_{p \leq y} \log(\alpha\log p(1 + \O(\alpha \log p))) \notag \\
&= - \pi(y) \log \alpha - \sum_{p \leq y} \log \log p + O(\alpha) \sum_{p \leq y} \log p .\label{tag}
\end{align}
Using partial summation, 
$$\abs{\sum_{p \leq y} \log \log p - \pi(Y) \log \log Y} \ll \int_2^Y\frac{dt}{\log^2 t } = \left(\mathrm{li}(t)  - \frac{t}{\log t}\right)\Big|_2^Y \ll \frac{Y}{\log^2 Y},$$ and by the prime number theorem, $\sum_{p \leq y} \log p = (1 + o(1)) y$. Hence,  
\begin{align*}
(\ref{tag}) &= \pi(y)\left( -\log y + \log \log X + \log \log y + \O(1) - \log \log y\right) + \O( y/\log^2 y) + \O(y^2/(\log y \log X))\\
&= (1 + o(1)) \frac{y}{\log y} \log \left(\frac{\log X}{y}\right), 
\end{align*}
and so (\ref{psi}) gives
\begin{align}\label{finalpsi}
\log \Psi(X, y) = (1 + o(1)) \frac{y}{\log y} \log \left(\frac{\log X}{y}\right).
\end{align}
Finally, we combine the results above. Let $X = N^{1 - (1 + o(1))\frac{y}{\log N}}$, so 
$$\log X = \log N\left(1 - (1 + o(1)) \frac{y}{\log N}\right) = \log N (1 + o(1))$$ when $y = o(\log N)$. Clearly, for such $y$ we also have $y = o(\log X)$, so it follows from Theorem \ref{hil} that (\ref{finalpsi}) holds for such $y$ and $X$. Finally, combining (\ref{finalpsi}) with (\ref{sizeofm}) and (\ref{totoo}) and using that $r = \pi(y) = o(y)$ (i.e. $\log( 8^{r}) = o(y)$), we see that

$$\log (M_N(y, \phi)/N) \leq \O(\log \log N) + o(y) + (1 + o(1))\frac{y}{\log y}\log \left( \frac{\log N}{y}\right)$$ as long as $y = o(\log N)$ and $y \to \infty$ as $N \to \infty$. 
In particular, when $y = (\log N)^\beta$ for $0 < \beta < 1$, 
$$\log \left(\frac{\log N}{y}\right) = \frac{1 - \beta}{\beta} \log y,$$ so 
$$\log (M_N(y, \phi)/N) \leq  (1 + o(1))\frac{1 - \beta}{\beta} y.$$
Since $s(N) \asymp N$, this proves Theorem \ref{thmthr}.

\section{Number of Forms with Degree $d$ Hecke Fields}\label{easysection}

For a prime level $N$, let $H(N)_d \subset H(N)$ denote Hecke forms whose Hecke eigenvalues span a number field of degree exactly $d$. We bound the size of $H(N)_d$ using the multiplicity bound from the previous section. 

Specifically, let $y > 0$, $r = \pi(y)$, and for $f \in H(N)_d$, and let $a_f(p) = \lambda_f(p) \sqrt{p}$ be the $p^\text{th}$ Hecke operator eigenvalue of $f$. To prove Theorem \ref{thmfour}, we combine the multiplicity bound with an upper bound on the set $$T_N(y)_d: = \{(a_f(p_1), \ldots, a_f(p_r)) | f \in H(N)_d  \}$$ of possible tuples of eigenvalues of a Hecke form at the first $r$ primes. We get this bound by exploiting that $a_f(p)$ is a totally real algebraic integers whose conjugates are bounded by $2 \sqrt{p}$ in size. 

\begin{prop}\label{tuplebound}

\begin{align}
\#\abs{T_N(y)_d} \leq \exp\left(y d/2 + o_d(y) \right).
\end{align} 
\end{prop}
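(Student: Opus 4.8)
The plan is to count the tuples in $T_N(y)_d$ by exploiting that, for a fixed $f$, all of $a_f(p_1),\dots,a_f(p_r)$ lie in \emph{one} common totally real number field $K_f$ — the subfield of the (totally real) Hecke field of $f$ generated by these $r$ eigenvalues, of degree $e_f\le d$ — in which they are algebraic integers all of whose Galois conjugates have absolute value $\le 2\sqrt{p_i}$. The naive estimate, treating each coordinate separately via its minimal polynomial (which has $\le d+1$ integer coefficients of size $\ll_d p_i^{d/2}$), would give $\ll_d\prod_{p\le y}p^{d(d+1)/4}=\exp\!\big(\tfrac{d(d+1)}{4}y+o_d(y)\big)$ by the prime number theorem, too large by a factor $\sim(d+1)/2$ in the exponent. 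The saving comes from the geometry of numbers: inside a fixed order the conjugates of an element form a single lattice point rather than $e_f$ free coordinates, so a box of side $B$ contains only $\ll_d B^{d}$ of them rather than $B^{d(d+1)/2}$. Since every tuple in $T_N(y)_d$ determines its field, it suffices to bound
\[
\#\abs{T_N(y)_d}\ \le\ \#\{K:\ K=K_f\text{ for some }f\}\ \times\ \max_{K}\,\#\{\text{tuples in }T_N(y)_d\text{ with field }K\}.
\]

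For the inner maximum, fix a totally real $K$ of degree $e\le d$ and view $\mathcal{O}_K$ as a full-rank lattice $\Lambda\subset\R^{e}$ via its $e$ real embeddings $\sigma_1,\dots,\sigma_e$. Because $\prod_j\abs{\sigma_j(\alpha)}=\abs{N_{K/\Q}(\alpha)}\ge 1$ for every nonzero $\alpha\in\mathcal{O}_K$, one has $\max_j\abs{\sigma_j(\alpha)}\ge 1$, so all successive minima of $\Lambda$ in the sup norm are $\ge 1$; the classical Minkowski bound $\#\{v\in\Lambda:\|v\|_\infty\le t\}\le\prod_j(2t/\lambda_j+1)$ then gives $\#\{\alpha\in\mathcal{O}_K:\abs{\sigma_j(\alpha)}\le B\ \forall j\}\ll_d B^{d}$ for $B\ge1$. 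Since the $r$ coordinates of an admissible tuple are constrained independently, the number of tuples with field $K$ is $\ll_d\prod_{i=1}^r p_i^{d/2}$, and because $\sum_{p\le y}\tfrac d2\log p=\tfrac d2(1+o(1))y$ by the prime number theorem while the product of the $\ll_d$ constants contributes only $\exp(\pi(y)\,O_d(1))=\exp(o_d(y))$, this is at most $\exp\!\big(\tfrac d2 y+o_d(y)\big)$.

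It remains to see that only $\exp(o(y))$ fields $K_f$ occur, and this is the one place where $r=\pi(y)=o(y)$ is essential. For each $f$ there is a primitive element of the form $\gamma_f=\sum_{i=1}^r c_i\,a_f(p_i)$ with $c_i\in\{0,1,\dots,\binom d2\}$: writing $\sigma_1,\dots,\sigma_{e_f}$ for the embeddings of $K_f$, the combination fails to generate $K_f$ exactly when $\sum_i c_i\big(\sigma_j(a_f(p_i))-\sigma_k(a_f(p_i))\big)=0$ for some $j\ne k$, and for each of these $\le\binom{e_f}2\le\binom d2$ nontrivial linear conditions the cube $\{0,\dots,\binom d2\}^r$ contains at most $(\binom d2+1)^{r-1}$ solutions, so a good $(c_i)$ exists by counting. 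Then $\gamma_f$ is an algebraic integer all of whose conjugates have absolute value $\le\binom d2\sum_i 2\sqrt{p_i}\le\binom d2\cdot2\pi(y)\sqrt{p_r}=:M_0=O_d(y^{3/2})$, and $K_f=\Q(\gamma_f)$ is pinned down by the minimal polynomial of $\gamma_f$ — a monic integer polynomial of degree $\le d$ with coefficients $\ll_d M_0^{\,d}$ — so at most $\ll_d M_0^{\,d^2}=y^{O_d(1)}=\exp(o(y))$ fields can arise. Combining the two bounds gives $\#\abs{T_N(y)_d}\le\exp(o(y))\cdot\exp\!\big(\tfrac d2 y+o_d(y)\big)=\exp\!\big(\tfrac{yd}{2}+o_d(y)\big)$, as claimed. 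The delicate step is the last one — packaging all eigenvalues at primes $\le y$ into a single primitive generator of $K_f$ that is only polynomially large in $y$, thereby confining $K_f$ to one of $\exp(o(y))$ possibilities; everything else is the lattice inequality $\lambda_1(\mathcal{O}_K)\ge 1$ and the prime number theorem.
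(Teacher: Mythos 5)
Your argument is correct, and its skeleton coincides with the paper's: you factor the count as (number of possible fields $K_{f,r}$) times (maximal number of tuples with entries in a fixed totally real $K$ of degree $\le d$), and you handle the second, dominant factor exactly as the paper's Lemma \ref{secondl} does --- embedding $\mathcal{O}_K$ as a lattice whose nonzero vectors have sup norm $\ge 1$ because $\abs{N_{K/\Q}(\alpha)}\ge 1$, so a box of side $\asymp p_i^{1/2}$ contains $\ll_d p_i^{d/2}$ points, and the product over $p\le y$ gives $\exp(dy/2+o_d(y))$ by the prime number theorem. Where you genuinely diverge is the field-counting step. The paper (Lemma \ref{firstl}) bounds the discriminant of each $\Q(a_f(p_i))$ via $\prod_{i\ne j}\abs{\beta_i-\beta_j}\ll_d y^{d^2/2}$, passes to the compositum, and invokes Schmidt's theorem on the number of degree-$\le d$ fields of bounded discriminant. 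You instead build an explicit primitive element $\gamma_f=\sum_i c_i a_f(p_i)$ with $c_i\in\{0,\dots,\binom d2\}$ --- the existence argument is sound, since two distinct embeddings of $K_f$ cannot agree on all generators, so each bad condition is a genuinely nontrivial linear equation cutting out at most $(\binom d2+1)^{r-1}$ points of the cube --- and then count monic integer polynomials of degree $\le d$ with coefficients $\ll_d M_0^d$, $M_0=O_d(y^{3/2})$, getting $y^{O_d(1)}=\exp(o(y))$ fields. This is more elementary and self-contained (no appeal to Schmidt), at the cost of a slightly fussier construction; both routes deliver the same $\exp(o_d(y))$, which is all the proposition needs. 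One pedantic point: the minimal polynomial of $\gamma_f$ determines $\Q(\gamma_f)$ only up to the choice of one of its $\le d$ roots, so your field count should carry an extra factor of $d=O_d(1)$; this is harmless.
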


\begin{lemma}\label{firstl}

For $f \in H(N)_d$, let $K_{f, r}:= \Q(a_f(p_1), \ldots, a_f(p_r))$. Then:

$$\#\abs{\{K_{f, r} | f \in H(N)_d\}} \ll_d y^{\kappa} $$ where $\kappa = \kappa(d)$ is a constant depending on $d$. 
\end{lemma}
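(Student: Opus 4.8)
The plan is to produce, for each $f \in H(N)_d$, a single algebraic integer $\beta_f$ generating $K_{f,r}$ whose degree over $\Q$ is at most $d$ and all of whose archimedean conjugates are of size $\ll_d y^{3/2}$. Since an algebraic integer of bounded degree with bounded conjugates has a monic minimal polynomial whose integer coefficients are confined to an explicit box, and since $K_{f,r} = \Q(\beta_f)$ is recovered from that polynomial, only $\ll_d y^{\kappa(d)}$ fields can occur.

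First I would collect the arithmetic input already recorded in the paper: for $f \in H(N)_d$ the coefficients $a_f(p_i) = \sqrt{p_i}\,\lambda_f(p_i)$ are totally real algebraic integers lying in a number field of degree $d$, and applying the Eichler bound $|\lambda_f(p)| \le 2$ to every Galois conjugate gives $|\sigma(a_f(p_i))| \le 2\sqrt{p_i} \le 2\sqrt{y}$ for all embeddings $\sigma$. In particular $K_{f,r} = \Q(a_f(p_1), \dots, a_f(p_r))$ is a number field whose degree $e_f$ divides $d$, so $e_f \le d$.

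The key step is an effective primitive-element argument with multipliers bounded purely in terms of $d$: I claim there are integers $c_1, \dots, c_r$ with $0 \le c_i \le \binom{d}{2}$ such that $\beta_f := \sum_{i=1}^r c_i\, a_f(p_i)$ generates $K_{f,r}$ over $\Q$. Indeed, letting $\sigma_1, \dots, \sigma_{e_f}$ be the embeddings of $K_{f,r}$, the sum is a primitive element precisely when the $e_f$ numbers $\sum_i c_i \sigma_k(a_f(p_i))$ are pairwise distinct, i.e.\ when for each of the $\binom{e_f}{2}$ pairs $k \ne l$ the integer linear form $\sum_i c_i\big(\sigma_k(a_f(p_i)) - \sigma_l(a_f(p_i))\big)$ is nonzero; its coefficient vector is nonzero because the $a_f(p_i)$ generate $K_{f,r}$, so no two embeddings agree on all of them. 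A single such form vanishes at no more than $(M+1)^{r-1}$ of the $(M+1)^r$ lattice points of $\{0, 1, \dots, M\}^r$, so taking $M = \binom{d}{2} \ge \binom{e_f}{2}$ leaves a valid tuple $(c_i)$ by counting. (One could first pass greedily to a sub-tuple of at most $d$ of the $a_f(p_i)$ still generating $K_{f,r}$, which lowers the bound on the conjugates of $\beta_f$ below to $\ll_d \sqrt{y}$ and sharpens $\kappa$, but this is not needed.)

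Finally I would read off the conclusion: $\beta_f$ is an algebraic integer with $[\Q(\beta_f):\Q] = e_f \le d$ and every conjugate bounded by $\binom{d}{2}\sum_{p \le y} 2\sqrt{p} \ll_d y^{3/2}$; hence its monic minimal polynomial in $\Z[t]$, of degree $e_f \le d$, has its coefficient of $t^{\,e_f-j}$ equal up to sign to the $j$-th elementary symmetric function of the conjugates, so of absolute value $\le \binom{d}{j}\big(C_d\, y^{3/2}\big)^{j} \ll_d y^{3d/2}$. Thus $m_{\beta_f}$ lies in an explicit finite set of polynomials of size $\ll_d y^{\kappa}$ with $\kappa = \kappa(d) \asymp d^2$, and since $K_{f,r} = \Q(\beta_f)$ is determined by $m_{\beta_f}$, we get $\#\{K_{f,r} : f \in H(N)_d\} \ll_d y^{\kappa}$. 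The only genuine obstacle is the third paragraph: everything hinges on the multipliers $c_i$ being bounded by a constant depending on $d$ alone — not on $r$, $y$, or $N$ — since this is exactly what keeps the conjugates of $\beta_f$, hence the box containing $m_{\beta_f}$, polynomially bounded in $y$; the surrounding steps are routine estimates with symmetric functions and with $\sum_{p \le y}\sqrt{p} \ll y^{3/2}$.
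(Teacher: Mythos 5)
Your proof is correct, but it takes a genuinely different route from the paper's. The paper bounds the discriminant of each $K_i := \Q(a_f(p_i))$ by the discriminant of the minimal polynomial of $a_f(p_i)$ (getting $\abs{\Delta_i} \ll_d y^{d^2/2}$ from the conjugate bound $2\sqrt{p_i}$), observes that $K_{f,r}$ is a compositum of at most $\log_2 d$ of the $K_i$ so that $\abs{\Delta_{K_{f,r}}} \ll_d y^{k(d)}$, and then invokes Schmidt's theorem counting number fields of bounded degree and discriminant. You instead construct an effective primitive element $\beta_f = \sum_i c_i a_f(p_i)$ with multipliers $0 \le c_i \le \binom{d}{2}$ bounded in terms of $d$ alone --- the counting argument over the box $\{0,\dots,M\}^r$, which you correctly identify as the crux, does work: each of the $\binom{e_f}{2}$ nonzero linear forms kills at most $(M+1)^{r-1}$ lattice points, and $M+1 > \binom{d}{2}$ leaves a good tuple --- and then you count the possible minimal polynomials of $\beta_f$ directly, since all conjugates are $\ll_d y^{3/2}$ and the degree is at most $d$. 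Your approach is more self-contained: it avoids both Schmidt's theorem and the (unstated in the paper) bound on the discriminant of a compositum in terms of the discriminants of its factors, at the cost of a somewhat larger but still admissible exponent $\kappa \asymp d^2$; the paper's approach is shorter modulo the cited result. Both rely on the same arithmetic input, namely that the $a_f(p_i)$ are totally real algebraic integers all of whose conjugates are bounded by $2\sqrt{p_i}$.
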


\begin{proof}
Let $K = K_{f, r}$ for some $f \in H(N)_d$. Let $K_i : = \Q(a_f(p_i))$ be of degree $d_i \leq d$ with discriminant $\Delta_i$, and let $P_i(x) = \prod (x - \beta_j)$ be the minimal polynomial of $a_f(p_i)$. Then 
$$\abs{\Delta_i} = \frac{\abs{\mathrm{disc} (P_i)}}{[\mathcal{O}_{K_i} : \Z[a_f(p_i)]]^2} \leq \prod_{i \neq j}\abs{(\beta_i - \beta_j)} \leq (4 \sqrt{p_i})^{d_i(d_i - 1)} \ll_d y^{d^2/2}.$$ Since $K$ has degree at most $d$, it can be expressed as a composition of at most $\log_2 d$ fields $K_i$, so the discriminant $\Delta$ of $K$ satisfies $$\abs{\Delta} \ll_d y^k$$ for some constant $k$ depending only on $d$. This implies a bound of the same form on the number of possibilities for $K$ by the Theorem of Schmidt (\cite{shm}).
\end{proof}

\begin{lemma}\label{secondl}
Let $K$ be a totally real number field of degree $\leq d$. Then for $M > 1$, the number of $\alpha \in \mathcal{O}_K$ such that all the Galois conjugates of $\alpha$ are bounded by $M$ is at most $C(d) M^d$ for some constant $C(d)$ which does not depend on $K$. 
\end{lemma}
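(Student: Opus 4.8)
The plan is to reinterpret the quantity as a lattice-point count and then apply a volume-packing bound; the whole point is that the relevant lattice has a universal lower bound on its shortest nonzero vector, which is what makes the estimate uniform in $K$ (in particular independent of $\disc K$).

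First I would set up the Minkowski embedding. Write $e := [K:\Q] \le d$ and let $\sigma_1, \dots, \sigma_e \colon K \hookrightarrow \R$ be the archimedean embeddings, all real since $K$ is totally real. The map $\sigma := (\sigma_1, \dots, \sigma_e) \colon K \to \R^e$ sends $\mathcal{O}_K$ to a full-rank lattice $\Lambda := \sigma(\mathcal{O}_K) \subset \R^e$, and $\alpha \in \mathcal{O}_K$ has all its Galois conjugates of absolute value $\le M$ exactly when $\sigma(\alpha) \in [-M,M]^e$ (each conjugate over $\Q$ occurs among the $\sigma_i(\alpha)$, and conversely). So it is enough to bound $\#\bigl(\Lambda \cap [-M,M]^e\bigr)$.

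Next I would lower-bound the shortest vector of $\Lambda$. For $0 \ne \alpha \in \mathcal{O}_K$, the norm $\prod_i \sigma_i(\alpha) = N_{K/\Q}(\alpha)$ is a nonzero rational integer, so $\prod_i |\sigma_i(\alpha)| \ge 1$; AM--GM then gives $\tfrac1e \sum_i \sigma_i(\alpha)^2 \ge \bigl(\prod_i \sigma_i(\alpha)^2\bigr)^{1/e} \ge 1$, i.e. $\|\sigma(\alpha)\|_2 \ge \sqrt e$. Hence the open Euclidean balls of radius $\sqrt e / 2$ about the points of $\Lambda$ are pairwise disjoint.

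Finally I would run the standard packing comparison: every point of $\Lambda \cap [-M,M]^e$ lies in the ball $B(0, M\sqrt e)$, so the disjoint balls of radius $\sqrt e/2$ about these points all lie in $B(0, M\sqrt e + \sqrt e/2)$, and comparing volumes gives $\#\bigl(\Lambda \cap [-M,M]^e\bigr) \le \bigl((M\sqrt e + \sqrt e/2)/(\sqrt e/2)\bigr)^e = (2M+1)^e \le (3M)^d$ since $e \le d$ and $M > 1$, so $C(d) = 3^d$ works. I do not expect a serious obstacle here; the only thing demanding care is the uniformity over $K$, and the resolution is precisely that the final volume argument uses only the shortest-vector bound from the previous step and never the covolume $\sqrt{|\disc K|}$ of $\Lambda$.
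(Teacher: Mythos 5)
Your proof is correct and follows essentially the same route as the paper: embed $\mathcal{O}_K$ via the real embeddings, use that the product of coordinates of a nonzero lattice vector is a nonzero integer to get a universal lower bound on the shortest vector (the paper settles for length $\geq 1$ where you sharpen to $\sqrt{e}$ via AM--GM), and then conclude by a disjoint-balls volume comparison. Your explicit emphasis that the argument never invokes the covolume $\sqrt{|\disc K|}$ is exactly the point of the lemma's uniformity in $K$.
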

\begin{proof}
Consider that standard embedding $\iota: K \hookrightarrow \R^d$.  For $\alpha \in \mathcal{O}_K$, the coordinates of $\iota(\alpha)$ are the Galois conjugates of $\alpha$; their product is a non-zero integer, so the non-zero vectors in the lattice formed by the image of $\mathcal{O}_K$ under $\iota$ have length $\geq 1$. From this, sphere packing bounds imply immediately that the number of lattice points in the box $[-M, M]^d$ is bounded by $O_d(1) M^d$ (this can be seen, for example, by placing (disjoint) balls of diameter $1$ at each lattice point in the box and comparing volumes).
\end{proof}

\begin{proof}[Proof of Proposition \ref{tuplebound}]
From Lemma \ref{secondl}, we see that for a fixed degree $K$ number field, the number of possible tuples $(a_f(p_1), \ldots, a_f(p_r))$ with $a_f(p_i) \in K$ is at most
$$\prod_{p \leq y} C(d) (2\sqrt{p})^d = (2C(d))^r \exp((d/2) \sum_{p \leq y} \log p ) =  \exp(dy/2 + o_d(y)),$$ where the last step uses the prime number theorem. On the other hand, from Lemma \ref{firstl}, the number of choices for $K$ is $\exp(O_d(\log y)) = \exp(o_d(y))$, so multiplying the two proves the proposition statement.
\end{proof}
 Combining this Proposition with Theorem \ref{thmthr},
$$\log s(N)_d/s(N) \leq -\left(\frac{1 - \beta }{\beta}- \frac{d}{2}\right)y +  o_d(y),$$ which concludes the proof of Theorem \ref{thmfour}.

Note that for the coefficient of $y$ to be negative, we have to choose $\beta$ small, which is why we dealt with multiplicity bounds in Theorem $3$ only for $0 \leq \beta \leq 1$. 

\section{Composite Level}\label{lastsection}
The discussion up to this point was restricted to weight $k = 2$ and level $N$ being prime. Theorems \ref{thmone} and \ref{thmtwo} can be extended without much change to allow varying weight and general $N$, as long as the relatively prime conditions $(mn, N) = 1$ and $(p_j, N) = 1$ are maintained. Indeed, the starting point, which is an application of the Petersson formula (\cite{ILS}, Corollary $2.2$ or \cite{IK}, Corollary $14.24$), gives the desired uniformity in the "harmonic" weighted form. To remove the weights, one has to take care with new and old forms and the Atkin-Lehner involutions in relating $\norm{f}^2_2$ and $L(1, \sym^2 f)$. 

On the other hand, in the proof of the multiplicity bounds in Theorem \ref{thmthr}, we used $y$-smooth numbers and the assumption that $(p, N) = 1$ for $p \leq y < \log N$. As we show in Theorem \ref{thmfiv} below, similar bounds can be proved for $N$'s that do not have an abnormal number of small prime factors. For "super-smooth" numbers, such as $N = \prod_{p \leq t} p$, we cannot make use of the approach to the Plancherel measure of the Hecke eigenvalues for small primes, and our bounds in Theorem \ref{thmthr} and \ref{thmfour} don't apply. 

In what follows, we restrict ourselves to the $s^*(N)$-dimensional space $S^*(N)$ of weight $2$ level $N$ newforms, which admits a simultaneous eigenbasis $H^*(N)$ with respect to Hecke operators $T_n$ with $(n, N) = 1$ (we assume these forms are normalized to have constant Fourier coefficient $1$).

For a positive integer $N$, the number of distinct prime divisors of $N$ is at most 

\begin{align}\label{sizeofp}(1 + o(1)) \frac{\log N}{\log \log N} =: \bf{P}.
\end{align} 
Let $y= y(N) =  o(\log N)$ be a parameter going to infinity with $N$, and let $r := \pi(y) \sim y/\log y$. Let $q_1, \ldots, q_r$ be the first $r$ primes which don't divide $N$. Since $q_k$ is at most the $(k + \mathbf{P})^\text{th}$ prime and $k \leq r = o(\bf{P}),$
we can conclude via the prime number theorem that
\begin{align}\label{sizeofq}
q_k \leq (1 + o(1))(\bf{P} \log \bf{P}).
\end{align}
In the spirit of section \ref{multipli}, we want to give a lower bound for the function
$$\Phi(q_1, \ldots,  q_r, X) := \#\abs{\{(\alpha_1, \ldots, \alpha_r): q_1^{\alpha_1} \cdots q_r^{\alpha_r} \leq X\}}$$ for $X$ (to be chosen later) satisfying 
\begin{align}\label{xislogn}
\log X = (1 + o(1)) \log N.
\end{align}
By (\ref{sizeofq}),
\begin{align}\label{ineq}
\Phi(q_1, \ldots,  q_r, X) &= \#\abs{\{(\alpha_1, \ldots, \alpha_r) :  q_1^{\alpha_1} \cdots q_r^{\alpha_r} \leq X\}}\geq  \notag \\ 
&\geq \#\abs{\{(\alpha_1, \ldots, \alpha_r) :  \big((1 + o(1))\mathbf{P}\log \mathbf{P}\big)^{\alpha_1 + \cdots + \alpha_r} \leq X\}} =  \notag \\
&= \#\abs{\{(\alpha_1, \ldots, \alpha_r) :  \alpha_1 + \cdots + \alpha_r \leq \log X/\log \big((1 + o(1))(\mathbf{P} \log \mathbf{P}) \big)\}} \notag \overset{\tiny{\text{ by }}(\ref{sizeofp}),(\ref{xislogn})}{=}\\
&= \#\abs{\{(\alpha_1, \ldots, \alpha_r) :  \alpha_1 + \cdots + \alpha_r \leq (1 + o(1)) \log X/\log \log X\}} .
\end{align}
The number of non-negative integer solutions to $x_1 + \ldots + x_A \leq B$ is $${A + B \choose A} \geq \left(\frac{B}{A} \right)^A ,$$ so (\ref{ineq}) implies 
\begin{align}\label{sizeofohi}
\log \Phi(q_1, \ldots,  q_r, X) \geq r \log \frac{(1 + o(1)) \log X}{r \log \log X}.
\end{align}

For $\phi \in S^*(N)$ a weight $2$ holomorphic cusp newform for $\Gamma_0(N)$, we let $M^*_N(q_1, \ldots, q_r, \phi)$ be the multiplicity of the tuple of eigenvalues of $\phi$ at primes $q_i$, i.e. 
$$M^*_N(q_1, \ldots, q_r, \phi) := \#\abs{ \left\{ f\in H^*(N) : \lambda_f(q_i) = \lambda_\phi(q_i) \text{ for all } i \leq r\right \}}.$$ We bound $M^*_N(q_1, \ldots, q_r, \phi)$ for a fixed $\phi$ via the large sieve inequality identically to section \ref{multipli}. Taking 
\begin{align*}
c_n :=
\begin{cases} \overline{\lambda_\phi(n)} &\text{ if } n= q_1^{\alpha_1} \cdots q_r^{\alpha_r} \leq X \\
0 &\text{ otherwise,}
\end{cases}
\end{align*}
we get
\begin{align}\label{sizeofmm}
M^*_N(q_1, \ldots, q_r, \phi)  \ll N(\log N)^2/ \sum_{\substack{ n = q_1^{\alpha_1} \cdots q_r^{\alpha_r} \\ n \leq N}} \abs{\lambda_\phi(n)}^2.
\end{align}
Recreating the proof in section \ref{multipli}, we can see that 
\begin{align}\label{totooo}
\sum_{\substack{n \leq N\\ n \in \S_{q_1, \ldots, q_r}}} \abs{\lambda_\phi(n)}^2 \gg 8^{-r} \Phi\left(q_1, \ldots, q_r, \frac{N}{q_1 \cdots q_r}\right) \geq 8^{-r} \Phi\left(q_1, \ldots, q_r, \frac{N}{\mathbf{P}^{2r}}\right).
\end{align}
Let $X := N/\mathbf{P}^{2r}$. Recall that $r = \pi(o(\log N)) = o(\log N/\log \log N),$ so 
$$\log X = \log N  - 2r  (1 + o(1))\log \log N = \log N(1 + o(1)),$$ which means this choice of $X$ satisfies (\ref{xislogn}) and hence also satisfies (\ref{sizeofohi}).
Let $0 < \beta < 1$ and let
 $$y :=(\log N)^\beta,$$ so $$r = (1 + o(1)) (\log X)^\beta/\beta \log \log X.$$ Then (\ref{sizeofohi}) becomes

$$\log \Phi(q_1, \ldots, q_r, X) \geq (1 + o(1))\frac{ (\log X)^\beta}{\beta \log \log X} \log \frac{\beta \log X}{(\log X)^\beta} = (1 + o(1))\frac{ 1 - \beta}{\beta} (\log X)^\beta,$$ i.e.,

$$\log \Phi(q_1, \ldots, q_r, N) \geq (1 + o(1)) \frac{1 - \beta}{\beta} (\log N)^\beta.$$
Finally, by (\ref{sizeofmm}) and (\ref{totooo}),
\begin{align}\label{helo}
\log M^*_N(q_1, \ldots, q_r, \phi)/N \leq \log \log N + r \log 8 - (1 + o(1))\frac{1 - \beta}{\beta} (\log N)^\beta = - (1 + o(1))\frac{1 - \beta}{\beta} (\log N)^\beta
\end{align} (note that this bound is identical to the one in section \ref{multipli} which is sharp).

We apply (\ref{helo}) to extend Theorem \ref{thmfour} to more general $N$'s. For $T \geq 1$ fixed and for some $y = y(N)$ with $\log \log N \ll y \ll \log N$, we say that a large $N$ is $T$-super-smooth if 
$$\frac{\pi(y^T; N) }{ \pi(y)} = o(1),$$ where $\pi(z; N) = \# \abs{\{p \leq z: (p, N) = 1 \}}$ is the number of primes up to $z$ that don't divide $N$. Clearly, very few numbers are $T$-super-smooth for all $T$. 

Let $H^*(N)_d := \{f \in H^*(N) : d(f) = d\}$ be the set of Hecke newforms whose Fourier coefficients span a number field of degree $d$, $s^*(N)_d = \abs{H^*(N)_d}$. The following theorem extends Theorem \ref{thmfour} to non-super-smooth numbers.

\begin{thm}\label{thmfiv}
Let $0 \leq \beta \leq 1$, $y = (\log N)^\beta$, $T \geq 1$, and $d \geq 1$. Then for $N$ not $T$-super-smooth,

$$ s^*(N)_d \leq \exp\left(-  \left(\frac{1 - \beta}{\beta} - \frac{dT}{2}\right)y + o_{T, d}(y)\right) s^*(N)$$ as $N \to \infty$. 
\end{thm}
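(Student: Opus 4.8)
The plan is to reproduce, for newforms, the proof of Theorem \ref{thmfour}: combine the multiplicity bound $(\ref{helo})$ --- the newform analogue of Theorem \ref{thmthr}, already established above for the primes $q_1,\ldots,q_r$ --- with an upper bound, playing the role of Proposition \ref{tuplebound}, on the number of eigenvalue tuples $(a_f(q_1),\ldots,a_f(q_r))$ that occur for $f\in H^*(N)_d$. The role of the hypothesis is to confine the relevant primes to a short range: if $N$ is not $T$-super-smooth then $\pi(y^T;N)\ge\pi(y)=r$, so the $r$ smallest primes $q_1<\ldots<q_r$ coprime to $N$ all satisfy $q_i\le y^T$, and hence by the prime number theorem
$$\sum_{i=1}^{r}\log q_i\le(1+o(1))\,Ty.$$
This is the one and only point where non-super-smoothness is used; for genuinely super-smooth $N$ the $q_i$ can be as large as $\mathbf{P}\log\mathbf{P}$ and this bound degrades.

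With the $q_i$ confined to $[1,y^T]$, I would rerun the number-field geometry of Lemmas \ref{firstl} and \ref{secondl} with $y$ replaced by $y^T$. For $f\in H^*(N)_d$, each $a_f(q_i)=\lambda_f(q_i)\sqrt{q_i}$ is a totally real algebraic integer lying in a subfield of the degree-$d$ Hecke field of $f$, all of whose conjugates have modulus $\le2\sqrt{q_i}\le2y^{T/2}$. As in Lemma \ref{firstl}, $\abs{\disc \Q(a_f(q_i))}\ll_d y^{O_d(T)}$, so $K_{f,r}:=\Q(a_f(q_1),\ldots,a_f(q_r))$, being a compositum of at most $\log_2 d$ of these subfields, has $\abs{\disc K_{f,r}}\ll_{d,T}y^{O_{d,T}(1)}$; by Schmidt's theorem there are then only $\exp(o_{T,d}(y))$ possibilities for $K_{f,r}$. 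For each fixed such field $K$, Lemma \ref{secondl} gives at most $C(d)(2\sqrt{q_i})^d$ choices for $a_f(q_i)\in\mathcal{O}_K$, so the number of tuples with every coordinate in $K$ is at most
$$\prod_{i=1}^{r}C(d)(2\sqrt{q_i})^d=\exp\!\Big(O_d(r)+\tfrac d2\sum_{i=1}^{r}\log q_i\Big)\le\exp\!\Big(\tfrac{dT}{2}\,y+o_{T,d}(y)\Big),$$
where we used $r=\pi(y)=o(y)$. Multiplying by the number of admissible fields, the set of eigenvalue tuples occurring for $f\in H^*(N)_d$ has size at most $\exp\!\big(\tfrac{dT}{2}y+o_{T,d}(y)\big)$.

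It remains to assemble the pieces. If $f,g\in H^*(N)$ share the tuple $(a(q_i))_i$ then $\lambda_f(q_i)=\lambda_g(q_i)$ for all $i$, so partitioning $H^*(N)_d$ according to this tuple and using $(\ref{helo})$ (which is uniform in $\phi$) gives
\begin{align*}
s^*(N)_d &\le \exp\!\Big(\tfrac{dT}{2}y+o_{T,d}(y)\Big)\cdot\max_{\phi}M^*_N(q_1,\ldots,q_r,\phi) \\
&\le \exp\!\Big(\tfrac{dT}{2}y+o_{T,d}(y)\Big)\cdot N\exp\!\Big(-(1+o(1))\tfrac{1-\beta}{\beta}\,y\Big),
\end{align*}
using $y=(\log N)^\beta$. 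Dividing by $s^*(N)$ and invoking the standard lower bound $s^*(N)=\dim S_2^{\mathrm{new}}(\Gamma_0(N))\gg N/\log\log N$ (so that $N/s^*(N)=\exp(o(y))$, since $y=(\log N)^\beta$ with $\beta>0$) yields
$$s^*(N)_d\le\exp\!\Big(-\big(\tfrac{1-\beta}{\beta}-\tfrac{dT}{2}\big)y+o_{T,d}(y)\Big)\,s^*(N),$$
which is the theorem. The main obstacle is the first paragraph --- pinning down exactly what "not $T$-super-smooth" buys, i.e.\ the bound $\sum_i\log q_i\le(1+o(1))Ty$; everything afterwards is the number-field counting of Section \ref{easysection} applied to primes of size $\le y^T$, together with the already-proven $(\ref{helo})$.
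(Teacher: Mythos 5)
Your proposal follows essentially the same route as the paper's proof: it uses non-$T$-super-smoothness exactly as the paper does (to place $q_1,\ldots,q_r$ in $[1,y^T]$, hence $\sum_i\log q_i\le(1+o(1))Ty$), reruns Lemmas \ref{firstl} and \ref{secondl} with $y$ replaced by $y^T$ to count eigenvalue tuples by $\exp(\tfrac{dT}{2}y+o_{T,d}(y))$, and combines this with (\ref{helo}) and the normalization $N/s^*(N)=\exp(o(y))$ (the paper uses $s^*(N)\asymp\phi(N)$ where you quote a dimension lower bound, but these are interchangeable here). The argument is correct and matches the paper's.
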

\begin{proof}
The proof emulates that of section \ref{easysection}. For $f \in H^*(N)_d$, let $a_f(q_i) = \lambda_f(q_i) \sqrt{q_i}$ be the eigenvalue of $f$ for the Hecke operator $T_{q_i}$, and let $$T^*_N(q_1, \ldots, q_r)_d: = \{ (a_f(q_1), a_f(q_2), \ldots, a_f(q_r)) | f \in H^*(N)_d\}$$ denote the set of possible eigenvalue tuples of a form in $H^*(N)_d$ at the first $r$ primes not dividing $N$. Repeating verbatim the proof of Lemma \ref{firstl}, there are $\ll_d y^{T\kappa(d)}$ possible number fields of the form $\Q(a_f(q_1), \ldots, a_f(q_r))$. Tautologically, for $N$ as in the statement of the theorem, the first $r$ primes $q_1, \ldots, q_r$ not dividing $N$ satisfy $q_i \leq y^{T}$, so using Lemma \ref{secondl}, we get 
$$  \# \abs{T^*_N(q_1, \ldots, q_r)} \leq y^{T \kappa(d)} \prod_{i \leq r} C(d) y^{Td/2}   \leq \exp((d/2)T r \log y + o_{T,d}(y)) \leq \exp((d/2)T y + o_{d,T}(y)).$$ Combined with (\ref{helo}), this gives 

$$\log s(N)_d/ N \leq -  \left(\frac{1 - \beta}{\beta} - \frac{d T}{2}\right)y   + o_{T, d}(y).$$ It remains to note that $s^*(N) \asymp \phi(N)$, the Euler's totient function, and $\log \phi(N) = \log N + O(\log \log \log N),$ which completes the proof.

\end{proof}

\bibliographystyle{alpha}
\bibliography{szpaper}

\begin{thebibliography}{CDF97}

\bibitem[CDF97]{CDF}
J.~B. Conrey, W.~Duke, and D.~W. Farmer.
\newblock The distribution of the eigenvalues of {H}ecke operators.
\newblock {\em Acta Arith.}, 78(4):405--409, 1997.

\bibitem[CGL09]{cgl}
Denis~X. Charles, Eyal~Z. Goren, and Kristin~E. Lauter.
\newblock Families of {R}amanujan graphs and quaternion algebras.
\newblock In {\em Groups and symmetries}, volume~47 of {\em CRM Proc. Lecture
  Notes}, pages 53--80. Amer. Math. Soc., Providence, RI, 2009.

\bibitem[DK00]{DK}
W.~Duke and E.~Kowalski.
\newblock A problem of {L}innik for elliptic curves and mean-value estimates
  for automorphic representations.
\newblock {\em Invent. Math.}, 139(1):1--39, 2000.
\newblock With an appendix by Dinakar Ramakrishnan.

\bibitem[Eic54]{Eichlerramanujan}
Martin Eichler.
\newblock Quatern\"{a}re quadratische {F}ormen und die {R}iemannsche
  {V}ermutung f\"{u}r die {K}ongruenzzetafunktion.
\newblock {\em Arch. Math.}, 5:355--366, 1954.

\bibitem[HL94]{HL}
Jeffrey Hoffstein and Paul Lockhart.
\newblock Coefficients of {M}aass forms and the {S}iegel zero.
\newblock {\em Ann. of Math. (2)}, 140(1):161--181, 1994.
\newblock With an appendix by Dorian Goldfeld, Hoffstein and Daniel Lieman.

\bibitem[HT86]{hild}
Adolf Hildebrand and G\'{e}rald Tenenbaum.
\newblock On integers free of large prime factors.
\newblock {\em Trans. Amer. Math. Soc.}, 296(1):265--290, 1986.

\bibitem[IK04]{IK}
Henryk Iwaniec and Emmanuel Kowalski.
\newblock {\em Analytic number theory}, volume~53 of {\em American Mathematical
  Society Colloquium Publications}.
\newblock American Mathematical Society, Providence, RI, 2004.

\bibitem[ILS00]{ILS}
Henryk Iwaniec, Wenzhi Luo, and Peter Sarnak.
\newblock Low lying zeros of families of {$L$}-functions.
\newblock {\em Inst. Hautes \'{E}tudes Sci. Publ. Math.}, (91):55--131 (2001),
  2000.

\bibitem[IM01]{IM}
H.~Iwaniec and P.~Michel.
\newblock The second moment of the symmetric square {$L$}-functions.
\newblock {\em Ann. Acad. Sci. Fenn. Math.}, 26(2):465--482, 2001.

\bibitem[Iwa84]{crelle}
Henryk Iwaniec.
\newblock Prime geodesic theorem.
\newblock {\em J. Reine Angew. Math.}, 349:136--159, 1984.

\bibitem[Lin41]{linnik}
U.~V. Linnik.
\newblock The large sieve.
\newblock {\em C. R. (Doklady) Acad. Sci. URSS (N.S.)}, 30:292--294, 1941.

\bibitem[LW08]{LauWu}
Y.-K. Lau and J.~Wu.
\newblock A large sieve inequality of {E}lliott-{M}ontgomery-{V}aughan type for
  automorphic forms and two applications.
\newblock {\em Int. Math. Res. Not. IMRN}, (5):Art. ID rnm 162, 35, 2008.

\bibitem[MS09]{MS}
M.~Ram Murty and Kaneenika Sinha.
\newblock Effective equidistribution of eigenvalues of {H}ecke operators.
\newblock {\em J. Number Theory}, 129(3):681--714, 2009.

\bibitem[NS21]{ns}
Evita Nestoridi and Peter Sarnak.
\newblock Bounded cutoff window for the non-backtracking random walk on
  ramanujan graphs.
\newblock {\em arXiv preprint arXiv:2103.15176}, 2021.

\bibitem[Sar87]{sar1}
Peter Sarnak.
\newblock Statistical properties of eigenvalues of the {H}ecke operators.
\newblock In {\em Analytic number theory and {D}iophantine problems
  ({S}tillwater, {OK}, 1984)}, volume~70 of {\em Progr. Math.}, pages 321--331.
  Birkh\"{a}user Boston, Boston, MA, 1987.

\bibitem[Sar02]{rudnickletter}
Peter Sarnak.
\newblock Letter to {Z}. {R}udnick on multiplicities of eigenvalues for the
  modular surface.
\newblock 2002.

\bibitem[Sch95]{shm}
Wolfgang~M. Schmidt.
\newblock Number fields of given degree and bounded discriminant.
\newblock Number 228, pages 4, 189--195. 1995.
\newblock Columbia University Number Theory Seminar (New York, 1992).

\bibitem[Ser97]{serre}
Jean-Pierre Serre.
\newblock R\'{e}partition asymptotique des valeurs propres de l'op\'{e}rateur
  de {H}ecke {$T_p$}.
\newblock {\em J. Amer. Math. Soc.}, 10(1):75--102, 1997.

\bibitem[Wei48]{weil}
Andr\'{e} Weil.
\newblock On some exponential sums.
\newblock {\em Proc. Nat. Acad. Sci. U.S.A.}, 34:204--207, 1948.

\end{thebibliography}

\end{document}